\newtheorem{theorem}{Theorem}[section]
\numberwithin{theorem}{section}
\newtheorem{proposition}[theorem]{Proposition}
\newtheorem{big-problem}[theorem]{Big Problem}
\newtheorem{conjecture}[theorem]{Conjecture}
\theoremstyle{definition}
\theoremstyle{definition}
\newtheorem{definition}[theorem]{Definition}
\newtheorem{example}[theorem]{Example}
\theoremstyle{remark}
\newtheorem{remark}[theorem]{Remark}
\numberwithin{equation}{section}
\newcommand{\R}{\mathbb{R}}
\newcommand{\N}{\mathbb{N}}
\newcommand{\Z}{\mathbb{Z}}
\newcommand{\C}{\mathbb{C}}
\newcommand{\PP}{\mathbb{P}}
\DeclareMathOperator{\rank}{rank} 
\begin{document}

\title{An Early History of Toric Ideals}

\author{Serkan Ho\c{s}ten}
\address{Department of Mathematics,
San Francisco State University, San Francisco, CA, USA}
\email{serkan@sfsu.edu}

\subjclass[2020]{14M25, 14Q15, 13P10, 13P25}
\date{Jan 20, 2025}

\keywords{toric ideals, Gr\"obner bases, initial ideals, integer programming, regular triangulation, Gr\"obner fan}

\begin{abstract}
Toric ideals are everywhere. They have been in the commutative algebra lexicon since about 1990 when Bernd Sturmfels used the term. The early days of toric ideals and their Gr\"obner bases were full of new results and promising developments in their applications. Bernd has been consistently their biggest promoter through his own work and that of his collaborators and students. This article is a personal and subjective recalling of the first decade of toric ideals when Bernd played a central role. 
\end{abstract}

\maketitle

\section{Introduction}
I arrived to the United States as a graduate student at Cornell University in August 1992. I came to study operations research, in particular, discrete optimization such as integer linear programming and network flow algorithms. As the typical first year in the PhD program goes, I took foundational courses in optimization (including a course taught by Mike Todd in interior point methods for linear programming that was then the rage) and other core courses in stochastic processes and statistics. At the time, each first year student in the OR Department had to thoroughly read and then present to a committee of three faculty members a published research article of their choice. I chose Hendrik Lenstra's paper on solving integer programs in polynomial time in fixed dimension \cite{Lenstra83}. I was certain that I would work on algorithms and computation for integer programs (cutting planes, branch-and-bound, and all that) with Les Trotter. 

At Cornell, besides the major subject, one had to also take courses in two minor subjects. I knew one of my minor subjects would be computer science. What should have the other one been? In my first semester, I took a course in advanced linear algebra from the Math Department that was taught by Keith Dennis. I almost flunked it but I really liked Keith's teaching style, so when I found out he would be teaching an honors course in abstract algebra (groups, rings, etc.) I decided to take this course. Since my undergrad education was in engineering, I had no idea what the course would be about but I signed up anyway. And I {\it loved} it (and did very well). The natural choice for the second minor topic was emerging to be mathematics. What other courses would have been good to take then? I knew I would do the year-long graduate algebra sequence (to be taught by Mark Gross), but what else was out there?

Rekha Thomas was also a graduate student in the OR Department who was couple years ahead of me. My memory is dark about this point, but maybe I heard Rekha giving a talk on her work about Gr\"obner bases of toric ideals and solving integer programs or I simply stroke up a conversation with her about the same topic. We had a long and fascinating discussion. She told me in detail what she was doing and it looked beautiful to me. It looked like one can do both optimization and algebra at the same time. A perfect fit! She also told me that in Fall 1993 her advisor, a certain Bernd Sturmfels, would be teaching a course on Gr\"obner bases. It was clear that I had to take this course and also learn about toric ideals.

\section{Toric Ideals and their Gr\"obner Bases}\label{sec:toric-ideals}

Bernd used the first edition of {\it Ideals, Varieties, and Algorithms} \cite{CLO} that was published just the previous year. Agnes Szanto was also in the course and  Hal Schenck was the grader. In fact, Bernd was also teaching a course on toric varieties during the same semester. I knew zero algebraic geometry so I stuck with Gr\"obner bases.

Bernd had been thinking and publishing about Gr\"obner bases already since 1989.
His paper co-authored with Neil White on invariant theory and Gr\"obner bases \cite{SW} is seminal. Equally important is his solo paper from 1990 where he proved the following famous result. See also the chapter by Aldo Conca in this volume.  
\begin{theorem}\cite[Theorem 1]{S90} 
The set of $r$-minors of a generic $m \times n$ matrix $ x_{ij})$ is a reduced Gr\"obner basis of the ideal they generate with respect to the lexicographic order induced by 
$$ x_{1,n} \succ x_{1,n-1} \succ \cdots \succ x_{1,1} \succ x_{2, n} \succ x_{2, n-1} \succ \cdots \succ x_{2,1} \succ x_{m,n} \succ x_{m, n-1} \succ \cdots \succ x_{m,1}.$$
\end{theorem}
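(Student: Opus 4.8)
\emph{Proof proposal.} The plan is to pin down the initial terms of the minors, reduce the Gr\"obner basis claim to an equality of Hilbert functions, verify that equality combinatorially, and then read off reducedness. Write $S = K[x_{ij} : 1 \le i \le m,\ 1 \le j \le n]$, let $I_r \subseteq S$ be the ideal generated by the $r \times r$ minors of $(x_{ij})$, and let $\prec$ be the lexicographic order of the statement.

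First I would compute $\mathrm{in}_{\prec}(\Delta)$ for a single $r$-minor $\Delta$ on rows $i_1 < \cdots < i_r$ and columns $j_1 < \cdots < j_r$. Writing $\Delta = \sum_{\sigma \in S_r} \mathrm{sgn}(\sigma)\, x_{i_1 j_{\sigma(1)}} \cdots x_{i_r j_{\sigma(r)}}$, one notes that distinct $\sigma$ give distinct monomials, so all $r!$ terms survive. The $\prec$-largest variable occurring in $\Delta$ is $x_{i_1 j_r}$ (smallest row, then largest column), and every term not divisible by it is $\prec$ every term divisible by it; iterating this greedy choice over rows $i_2, \ldots, i_r$ and columns $j_1, \ldots, j_{r-1}$ gives $\mathrm{in}_{\prec}(\Delta) = x_{i_1 j_r} x_{i_2 j_{r-1}} \cdots x_{i_r j_1}$, the product along the main anti-diagonal. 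Let $J \subseteq S$ be the squarefree monomial ideal generated by these anti-diagonal monomials, one per $r$-minor.

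Next I would reduce to Hilbert functions. Since $\mathrm{in}_{\prec}(\Delta) \in J$ for every $r$-minor, $J \subseteq \mathrm{in}_{\prec}(I_r)$, and the $r$-minors form a Gr\"obner basis of $I_r$ precisely when $J = \mathrm{in}_{\prec}(I_r)$. The inclusion $J \subseteq \mathrm{in}_{\prec}(I_r)$ already forces $\dim_K (S/I_r)_d \le \dim_K (S/J)_d$ for all $d$ (as does, independently, the fact that the monomials outside $J$ span $S/I_r$, via the usual termination of reduction modulo the $\Delta$'s), so the whole statement comes down to the reverse inequality, i.e.\ $\mathrm{Hilb}(S/J) = \mathrm{Hilb}(S/I_r)$.

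This Hilbert function identity is the crux. Viewing the support $\mathrm{supp}(\mu)$ of a monomial $\mu$ as a set of cells of the $m \times n$ grid under the componentwise partial order, one has $\mu \in J$ iff $\mathrm{supp}(\mu)$ contains an antichain of size $r$ (such an antichain is exactly $r$ cells with strictly increasing rows and strictly decreasing columns, i.e.\ the support of some $r$-minor's anti-diagonal); equivalently, by Dilworth's theorem, $\mu \notin J$ iff $\mathrm{supp}(\mu)$ is covered by at most $r-1$ chains, which I would package as $(r-1)$-tuples of non-crossing monotone lattice paths and enumerate accordingly. On the other side, the classical straightening law (standard monomial theory of Doubilet--Rota--Stein and De Concini--Eisenbud--Procesi) furnishes a $K$-basis of $S/I_r$ by standard bitableaux of bounded shape, whose generating function is the same lattice-path enumeration via Lindstr\"om--Gessel--Viennot. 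Matching the two counts explicitly --- equivalently, exhibiting a single Stanley decomposition that computes both Hilbert series, which is the route of Sturmfels' original argument --- is the step I expect to be the main obstacle; the rest is bookkeeping. Once $\mathrm{in}_{\prec}(I_r) = J$ is established, reducedness is immediate: distinct $r$-minors have distinct anti-diagonal monomials (the support of such a monomial recovers both its row set and its column set), and distinct squarefree monomials of equal degree never divide one another, so no leading term divides another; and a non-leading term of $\Delta$ corresponds to some $\sigma \neq w_0$, whose $r$ cells fail to form an antichain and hence support no member of $J$. After rescaling each $\Delta$ by $\mathrm{sgn}(w_0) = (-1)^{\lfloor r/2 \rfloor}$ to make it monic, the $r$-minors are the reduced Gr\"obner basis.
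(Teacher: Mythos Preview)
The survey does not prove this theorem; it is merely quoted from \cite{S90} as part of the historical narrative, so there is no argument in the present paper to compare yours against.

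As a stand-alone outline your proposal is correct and is essentially the classical route (the one taken in \cite{S90} and later refined by Herzog--Trung and Bruns--Conca). The anti-diagonal leading terms under this lex order are right; the reduction of the Gr\"obner-basis claim to the Hilbert-function equality $\mathrm{Hilb}(S/J)=\mathrm{Hilb}(S/I_r)$ is valid; and your Dilworth description of the standard monomials of $J$ is exactly the point. The step you correctly flag as the obstacle is the whole content: in the literature it is carried out via the Knuth--Robinson--Schensted correspondence, which sends a monomial (an $\N$-matrix) to a pair of semistandard tableaux whose common shape has fewer than $r$ rows precisely when the support contains no $r$-antichain (Schensted's theorem, noting that in the sorted biword a strictly decreasing subsequence of column indices forces strictly increasing row indices and hence picks out an antichain of distinct cells), while the straightening-law basis of $S/I_r$ is enumerated by exactly the same pairs of tableaux. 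Your reducedness argument at the end is also correct: distinct minors have distinct squarefree anti-diagonals of equal degree, and any non-leading term of a minor has two comparable cells in its support and therefore lies outside $J$.
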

This brings us to the paper that started all regarding toric ideals and their Gr\"obner bases, namely {\it Gr\"obner bases of toric varieties} \cite{S91}. 
Although it is published in 1991, it was submitted in March 1990. This means that Bernd must have known the contents of the article sometime in late 1989 or early 1990. The only reason this fundamental article has only 38 citations on the MathSciNet is because everything in it and a lot more appears in Bernd's book {\it Gr\"obner bases and convex polytopes} \cite{S96} (949 citations in January 2025). 
\begin{definition}
Let $A = [a_1 \, a_2 \, \cdots \, a_n]$ be a $d \times n$ matrix with integer entries and $\rank(A)=d$. Consider the $\C$-algebra homomorphism
$$\phi_A \, : \, \C[x_1, \ldots, x_n] \longrightarrow \C[t_1^{\pm}, t_2^{\pm}, \ldots, t_d^{\pm}] \quad \quad \phi_A(x_i) = t_1^{a_{1i}}t_2^{a_{2i}}\cdots t_d^{a_{di}}.$$
The toric ideal $I_A$ is the kernel of $\phi_A$.
\end{definition}

\begin{remark} \label{rmk:A-matrix}
The above is the most general definition of a toric ideal. Typically, we want $\mathrm{cone}(A)$, the rational polyhedral cone generated by $a_1, \ldots, a_n$, to be pointed. This guarantees that the toric ideal $I_A$ does not contain binomials of the form $x^u - 1$. Furthermore, if we assume that
the monoid $\mathrm{cone}(A) \cap \N^d$ is
generated by $a_1, \ldots, a_n$, i.e. it is equal to $\N(A) := \{u_1a_1 + \cdots + u_na_n \, : \, u_1, \ldots, u_n \in \N \}$, then 
$I_A$ is the defining ideal of the affine (normal) toric variety $X_A$ . Finally, if we stipulate that the first row of $A$ is $[1 \, 1 \, \cdots \, 1]$, or more generally, the row span of $A$ contains this vector, then $I_A$ is a homogeneous ideal defining the projective toric variety $X_A$ in $\PP^{n-1}$; see \cite{S96, CLS}. In the rest of this chapter I will stay in this last setting as Bernd did in \cite{S91}.
\end{remark}
\begin{theorem} \cite[Lemma 2.2 and Lemma 2.5]{S91}
$I_A$ is a prime ideal generated by the binomials 
$$ \{ x^u - x^v \, : \, u-v \in \ker(A) \cap \Z^n \}.$$
Any reduced Gr\"obner basis of $I_A$ also consists of binomials. 
\end{theorem}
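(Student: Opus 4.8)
The plan is to dispatch the three claims in order.

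\textbf{Primeness.} The Laurent ring $\C[t_1^{\pm},\ldots,t_d^{\pm}]$ is a localization of $\C[t_1,\ldots,t_d]$, hence an integral domain, and $\C[x_1,\ldots,x_n]/I_A\cong\mathrm{im}(\phi_A)$ is a subring of it; so the quotient is a domain and $I_A$ is prime. In particular $I_A$ contains no monomial, since $\phi_A(x^a)=t^{Aa}\neq 0$.

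\textbf{Binomial generators.} Let $J$ be the ideal generated by the displayed binomials. For $w\in\ker(A)\cap\Z^n$ write $w=w^+-w^-$ with $w^+,w^-\in\N^n$ of disjoint support; then $Aw^+=Aw^-$, so $\phi_A(x^{w^+}-x^{w^-})=t^{Aw^+}-t^{Aw^-}=0$ and hence $J\subseteq I_A$. For the reverse inclusion I would argue at the level of $\C$-vector spaces. First note that if $a,b\in\N^n$ with $Aa=Ab$ and $c=\min(a,b)$ componentwise, then $a-c=(a-b)^+$ and $b-c=(a-b)^-$, so $x^a-x^b=x^c\bigl(x^{a-c}-x^{b-c}\bigr)\in J$. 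Now pick for each $m$ in the monoid $\N(A)$ a monomial $x^{\sigma(m)}$ with $A\sigma(m)=m$. Every monomial $x^a$ is then congruent modulo $J$ to $x^{\sigma(Aa)}$, so the classes $x^{\sigma(m)}+J$, $m\in\N(A)$, span $\C[x]/J$. Composing the natural surjection $\C[x]/J\twoheadrightarrow\C[x]/I_A$ with the isomorphism $\C[x]/I_A\cong\mathrm{im}(\phi_A)$ sends $x^{\sigma(m)}+J$ to the Laurent monomial $t^m$, and distinct $t^m$ are $\C$-linearly independent. A linear surjection carrying a spanning set to a linearly independent set is injective, so $I_A=J$.

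\textbf{Reduced Gröbner bases.} Fix any monomial order. The $S$-polynomial of two binomials $x^a-x^b$ and $x^c-x^d$ with leading terms $x^a$ and $x^c$ equals $\tfrac{L}{x^c}x^d-\tfrac{L}{x^a}x^b$ for $L=\mathrm{lcm}(x^a,x^c)$, again a binomial or $0$; and one reduction step of a binomial modulo a binomial likewise yields a binomial or $0$. Hence Buchberger's algorithm applied to the binomial generators from the previous paragraph never leaves the world of binomials, producing a binomial Gröbner basis, and the subsequent interreduction (removing elements whose leading monomial is a multiple of another's, then tail-reducing) preserves binomiality. Since the reduced Gröbner basis is unique, it consists of binomials.

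\textbf{Where the work is.} The only substantive step is the reverse inclusion $I_A\subseteq J$: primeness is a one-line remark and the Gröbner statement is the standard fact that binomial ideals have binomial reduced Gröbner bases. The crux is the bookkeeping with the coset representatives $x^{\sigma(m)}$ together with the linear independence of distinct Laurent monomials $t^m$; one should take a little care that every monomial can indeed be rewritten modulo $J$ onto its chosen representative, which is exactly what the $x^c\bigl(x^{a-c}-x^{b-c}\bigr)$ identity provides.
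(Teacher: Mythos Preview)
The paper does not actually prove this theorem; it merely states the result with a citation to \cite{S91} and moves on to an example. So there is no ``paper's own proof'' to compare against. That said, your argument is correct and is essentially the standard one (as in \cite[Chapter~4]{S96}): primeness is immediate from the domain target, the reverse inclusion $I_A\subseteq J$ is obtained by showing that the cosets $x^{\sigma(m)}+J$ span $\C[x]/J$ and map to the linearly independent Laurent monomials $t^m$, and the Gr\"obner statement is the routine observation that Buchberger's algorithm preserves binomiality.

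One cosmetic point: when you argue $J\subseteq I_A$, you check only the generators $x^{w^+}-x^{w^-}$, whereas the displayed generating set contains all $x^u-x^v$ with $Au=Av$. Of course $\phi_A(x^u-x^v)=t^{Au}-t^{Av}=0$ for any such pair, so this is trivial, but as written the logic is slightly out of order (you establish that the larger set is contained in the ideal generated by the smaller set only in the next sentence). It is not a mathematical gap, just something to tidy in exposition.
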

\begin{example} \label{ex:3x3x3}
Let 
$$A = [e_i \oplus e_j \oplus e_k \, : \, 1\leq i,j,k \leq 3], $$
where $e_1, e_2, e_3$ form the standard unit basis of $\R^3$. The corresponding 
monomial map is
$$\phi_A \, : \, \C[x_{111}, \ldots, x_{333}] \longrightarrow \C[a_1,a_2,a_3, b_1, b_2, b_3, c_1, c_2, c_3] \quad \quad \phi_A(x_{ijk}) = a_ib_jc_k, \, \, i,j,k=1,2,3.$$
The toric ideal $I_A$ is the defining ideal of the Segre embedding of $\PP^2 \times \PP^2 \times \PP^2$ in $\PP^{26}$. 
It is minimally generated by $162$ quadratic binomials. They form the reduced Gr\"obner of $I_A$ with respect to the degree reverse lexicographic order induced by 
$$ x_{111} \succ x_{112} \succ x_{113} \succ \cdots \succ x_{133} \succ x_{211} \succ x_{212} \succ x_{213} \succ \cdots \succ x_{233} \succ x_{311} \succ x_{312} \succ x_{313} \succ \cdots \succ x_{333}.$$
They are the union of the  $2$-minors of the following two matrices:
$$\begin{bmatrix}
 x_{111} &  x_{112} &  x_{113} &  x_{121} &  x_{122} & x_{123} &  x_{131} & x_{132} &  x_{133} \\
 x_{211} &  x_{212} &  x_{213} &  x_{221} &  x_{222} & x_{223} &  x_{231} & x_{232} &  x_{233} \\
 x_{311} &  x_{312} &  x_{313} &  x_{321} &  x_{322} & x_{323} &  x_{331} & x_{332} &  x_{133} \end{bmatrix},$$
$$\begin{bmatrix}
x_{111} & x_{121} & x_{131} &  x_{211} & x_{221} & x_{231} &  x_{311} &  x_{321} &  x_{331} \\
x_{112} & x_{122} & x_{132} &  x_{212} & x_{222} & x_{232} &  x_{312} &  x_{322} &  x_{332} \\
x_{113} & x_{123} & x_{133} &  x_{213} & x_{223} & x_{233} &  x_{313} &  x_{323} &  x_{333} 
\end{bmatrix}.
$$
We note that these matrices are two particular flattenings of the generic $3 \times 3 \times 3$ tensor $(x_{ijk})$.
\end{example}

Next, Bernd proved a bound on the degree of any reduced Gr\"obner basis element of $I_A$. This bound is singly-exponential in the input data, namely, in $d$, $n$, and the size of entries of $A$. It contrasts with a previous famous result 
which had shown that a binomial (although not toric) ideal could have reduced Gr\"obner basis elements with degrees doubly-exponential in the degrees of the input binomials \cite{MM82}. To state and prove the result, I introduce a few things which will be useful later on as well. For each sign pattern $\rho \in \{+1,-1\}^n$ we let $\R_\rho^n := \{x \in \R^n\, : \, \rho_i x_i \geq 0, \, i=1, \ldots, n\}$ and $H_\rho := \R_\rho^n \cap \ker_\Z(A)$. The latter is a pointed rational polyhedral cone, and it is generated by circuits of $A$ that are in $H_\rho$. A circuit of $A$ is a nonzero vector $u \in \ker_\Z(A)$ with relatively prime entries and with minimal support. The cardinality of the support of any circuit is at most $d+1$. If $\{i_1, \ldots, i_{d+1} \} \subset [n]$ contains the support of a circuit $u$ then
\begin{equation} \label{eq:circuit}
 u_{i_j} =  \pm \frac{1}{c} \det(a_{i_1}, \ldots, a_{i_{j-1}}, a_{i_{j+1}}, \ldots, a_{i_{d+1}})
 \end{equation}
where $c$ is the gcd of all $(d+1)$
determinants appearing in the above formula. The following degree bound theorem is a slight improvement of the one that is in \cite{S91} and appears in \cite{S96}.
\begin{theorem} \cite[Theorem 2.3]{S91} \cite[Theorem 4.7]{S96} \label{thm:degree}
Let $D(A)$ be the maximum of the absolute value of all $d$-minors of $A$. The degree of any reduced Gr\"obner basis element of $I_A$ is at most $(n-d)(d+1)D(A)$.   
\end{theorem}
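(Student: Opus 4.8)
The plan is to use the Gr\"obner basis hypothesis only through one combinatorial property of reduced Gr\"obner bases of toric ideals, and then to reduce the degree estimate to a bound on the circuits of $A$ together with Carath\'eodory's theorem. First I would record the shape of a reduced Gr\"obner basis element $g = x^u - x^v$ with $\mathrm{in}_\prec(g) = x^u$: the exponents $u,v$ have disjoint support, so, setting $w := u - v \in \ker_\Z(A)$, we have $u = w^+$ and $v = w^-$. Disjointness is forced by reducedness, because if some $x_k$ divided both $x^u$ and $x^v$ then $x^{u-e_k} - x^{v-e_k} \in I_A$ would be a nonzero binomial whose leading term either properly divides $x^u$ --- contradicting that $x^u$ is a minimal generator of $\mathrm{in}_\prec(I_A)$ --- or equals $x^{v-e_k}$, which would force $x^v \succ x^u$. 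I would also note that $x^v$ is a standard monomial.

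The crux is the following claim: if $w' \in \ker_\Z(A)$ satisfies $(w')^+ \le u$ and $(w')^- \le v$ coordinatewise, then $w' = 0$ or $w' = w$. To prove it I would examine the binomial $x^{(w')^+} - x^{(w')^-} \in I_A$ and its leading term. If the leading term is $x^{(w')^-}$, it lies in $\mathrm{in}_\prec(I_A)$ and divides $x^v$, forcing $x^v \in \mathrm{in}_\prec(I_A)$, contrary to $x^v$ being standard. If the leading term is $x^{(w')^+}$ with $(w')^+ \ne u$, it is a proper divisor of $x^u$ lying in $\mathrm{in}_\prec(I_A)$, contradicting minimality of $x^u$. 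In the remaining case $(w')^+ = u$ one has $x^u - x^{(w')^-} \in I_A$; subtracting $g = x^u - x^v$ gives $x^v - x^{(w')^-} \in I_A$, and unless $(w')^- = v$, i.e. $w' = w$, both of its monomials divide the standard monomial $x^v$ and are therefore standard, which is impossible for a nonzero element of $I_A$.

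It remains to combine this with the circuit geometry. Let $\rho$ be the sign vector of $w$, so $w \in H_\rho$. If $w$ is itself a circuit, then since it has support of size at most $d+1$ and each entry is bounded in absolute value by $D(A)$ via \eqref{eq:circuit}, we get $\deg(g) = \abs{w^+} \le \lVert w\rVert_1 \le (d+1)D(A) \le (n-d)(d+1)D(A)$. Otherwise write $w = \sum_{i=1}^s \mu_i c_i$ with the $c_i$ distinct circuits in $H_\rho$ and $\mu_i > 0$; since $H_\rho$ is generated by such circuits and lies in the $(n-d)$-dimensional space $\ker_\R(A)$, Carath\'eodory's theorem lets me take $s \le n-d$. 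Each $\mu_i \le 1$: if, say, $\mu_1 \ge 1$, then $w - c_1 = (\mu_1-1)c_1 + \sum_{i\ge 2}\mu_i c_i$ again lies in $H_\rho$ and, because $c_1 \in \R^n_\rho$, satisfies $(w-c_1)^+ \le u$ and $(w-c_1)^- \le v$; the claim then forces $w - c_1 = 0$ (impossible, as $w$ is not a circuit) or $w - c_1 = w$ (impossible, as $c_1 \ne 0$). Finally, since $w$ and all $c_i$ lie in $\R^n_\rho$ we have $w^+ = \sum_i \mu_i c_i^+$ coordinatewise, so
\[
\deg(g) = \abs{w^+} = \sum_{i=1}^s \mu_i \abs{c_i^+} \le \sum_{i=1}^s \abs{c_i^+} \le s\,(d+1)D(A) \le (n-d)(d+1)D(A).
\]

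I expect the claim in the second paragraph to be the main obstacle: it is the only place the Gr\"obner basis hypothesis enters, and the case analysis on the leading term of $x^{(w')^+} - x^{(w')^-}$ must be carried out with care. The improvement of this bound over the one in \cite{S91} comes precisely from invoking Carath\'eodory's theorem in the last step, so that the number of circuits appearing in the estimate is at most $n-d$ rather than the total number of circuits of $A$.
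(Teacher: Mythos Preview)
Your proof is correct and follows the same strategy as the paper's: write $w$ via Carath\'eodory's theorem as a conical combination of at most $n-d$ circuits in the signed orthant cone $H_\rho$, then use reducedness of the Gr\"obner basis to force every coefficient to be at most~$1$, so that the circuit degree bound $(d+1)D(A)$ yields the result. Your version is more carefully written---you isolate the key reducedness property as an explicit claim and argue $\mu_i\le 1$ by subtracting a single circuit $c_i$ rather than by the paper's fractional-part vector $\sum(\lambda_i-\lfloor\lambda_i\rfloor)u_i$---but the two arguments are essentially the same.
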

\begin{proof}
 Let $x^{v_+} - x^{v_-}$ be a reduced Gr\"obner basis element of $I_A$ where $v = v_+ - v_- \in \ker_\Z(A)$. Then $v$
 is in one of $H_\rho$. Since the dimension of $H_\rho$ is $n-d$, Carath\'{e}odory's theorem implies that 
 $v$ is a conical combination of at most 
 $n-d$ circuits in $H_\rho$:
 $$ v = \lambda_1 u_1 + \cdots + \lambda_{n-d} u_{n-d}.$$
 We claim each $\lambda_i \leq 1$. Otherwise, let $w =\sum_{i=1}^{n-d} (\lambda_i - \lfloor \lambda_i \rfloor) u_i$. We see that $ v = w + (v-w)$ and observe that both $w$ and $v-w$  are in $H_\rho$. But then if we write $w = w_+ - w_-$, then $x^{w_+} - x^{w_-}$ is in 
 $I_A$, and $x^{w_+}$ divides $x^{v_+}$ and $x^{w_-}$ divides $x^{v_-}$. This is a contradiction to our assumption that $x^{v_+} - x^{v_-}$ is in a reduced Gr\"obner basis. Since any circuit has 
 degree at most $(d+1)D(A)$ we conclude 
 that the degree of our binomial is at
 most $(n-d)(d+1)D(A)$.
\end{proof}
There are two other important results in \cite{S91} which spurred subsequent work. 
One is the construction of the {\it state polytope} of $I_A$. The other one is the characterization of the radical of any initial ideal of $I_A$. I briefly present these results.

Let $\{x^{u_1} - x^{v_1}, \,  \ldots, \,  x^{u_k} - x^{v_k}\}$ be a reduced Gr\"obner basis of $I_A$ where we assume that the first term in each binomial is the initial term. We consider the {\it Gr\"obner cone}
$\mathcal{C} = \{\omega \in \R^n \, : \, 
\omega \cdot u_i \geq \omega \cdot v_i, \, i=1, \ldots, k\}$. It turns out that although there are infinitely many term orders in a polynomial ring with at least two variables, every ideal has finitely many distinct reduced Gr\"obner bases \cite[Theorem 1.2]{S96}. Moreover, the collection of all Gr\"obner cones $\mathcal{C}$, one for each reduced Gr\"obner basis of $I_A$, is a complete polyhedral fan, called the {\it Gr\"obner fan} $\mathcal{G}_A$. The existence of Gr\"obner fans for any 
homogeneous polynomial ideal had been proved \cite{BM88, MR88}. Moreover, these fans are polytopal. In other words, there exists a polytope $\mathcal{S}_A$, called the {\it state polytope} of $I_A$ whose normal fan equals the Gr\"obner fan $\mathcal{G}_A$ \cite{BM88}.  
\begin{theorem} \cite[Proposition 2.7]{S91} Let $E_A$ be the zonotope generated by all circuits of $A$. The state polytope  $\mathcal{S}_A$ of the toric ideal $I_A$ is a Minkowski summand of 
the zonotope 
$\sum \{[u_+, u_-] \, : \, u \in E_A \cap \Z^n\}$.
\end{theorem}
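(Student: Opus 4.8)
The plan is to reduce the assertion to a single refinement statement about fans and then settle that statement using the tools already developed. I will use two standard facts: the normal fan $\mathcal{N}(\cdot)$ of a Minkowski sum is the common refinement of the normal fans of the summands, and a polytope $P$ is a (weak) Minkowski summand of a polytope $Q$ exactly when $\mathcal{N}(Q)$ refines $\mathcal{N}(P)$. Since the normal fan of a segment $[u_+,u_-]$ has the single wall $u^\perp$ (a segment with $u=0$, or with $u$ replaced by $-u$, contributes nothing new), the first fact identifies the normal fan of $Z:=\sum\{[u_+,u_-]\,:\,u\in E_A\cap\Z^n\}$ with the complete fan $\mathcal{H}$ cut out by the hyperplane arrangement $\{u^\perp\,:\,u\in E_A\cap\Z^n\}$. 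As $\mathcal{N}(\mathcal{S}_A)=\mathcal{G}_A$, the theorem follows once I show
\[ \mathcal{H}\ \text{refines the Gr\"obner fan}\ \mathcal{G}_A. \]

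First I would revisit the proof of Theorem~\ref{thm:degree} and extract the sharper conclusion hidden in it: if $x^{w_+}-x^{w_-}$ lies in \emph{any} reduced Gr\"obner basis of $I_A$, then for $\rho_i=\operatorname{sign}(w_i)$ we have $w=w_+-w_-\in H_\rho$, and Carath\'eodory's theorem writes $w=\sum_i\lambda_iu_i$ with the $u_i$ circuits in $H_\rho$ and $\lambda_i\in[0,1]$; hence $w$ lies in the sub-zonotope $\sum_i[0,u_i]\subseteq E_A$, so $w\in E_A\cap\Z^n$. In other words, every hyperplane $w^\perp$ supporting a wall of $\mathcal{G}_A$ is one of the hyperplanes defining $\mathcal{H}$.

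Next I would verify the refinement on maximal cones. Because $\mathcal{G}_A$ and $\mathcal{H}$ are complete, it suffices to show that each maximal cell $C$ of $\mathcal{H}$ lies in a single cone of $\mathcal{G}_A$; since the union of the interiors of the maximal Gr\"obner cones is dense and open, $C$ contains a rational $\omega$ in such an interior, and then \emph{every} point $\omega'$ of $C$ lies in the interior of a maximal Gr\"obner cone as well (the walls of $\mathcal{G}_A$ being among the hyperplanes of $\mathcal{H}$, which $\omega'$ avoids). Let $G$ be the reduced Gr\"obner basis picked out by $\omega$, so that $\operatorname{in}_\omega(I_A)=\langle\operatorname{in}_\omega(g):g\in G\rangle$. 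For each binomial $g=x^{g_+}-x^{g_-}\in G$ the vector $g_+-g_-$ lies in $E_A\cap\Z^n$, hence $\omega$ and $\omega'$ have the same sign on it and $\operatorname{in}_{\omega'}(g)=\operatorname{in}_\omega(g)$; since $g\in I_A$ this gives $\operatorname{in}_\omega(I_A)\subseteq\operatorname{in}_{\omega'}(I_A)$, and the symmetric argument applied to the reduced Gr\"obner basis of $\omega'$ gives the reverse inclusion. Thus $\operatorname{in}_\omega(I_A)=\operatorname{in}_{\omega'}(I_A)$, so $\omega$ and $\omega'$ determine the same Gr\"obner cone; therefore $C$ lies in that cone, and $\mathcal{H}$ refines $\mathcal{G}_A$, which finishes the proof modulo the two standard facts.

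I expect the genuine obstacle to be the first step: recognizing that the argument behind Theorem~\ref{thm:degree} does more than bound degrees — it confines every reduced Gr\"obner basis binomial to the circuit zonotope $E_A$, and this is exactly what forces the walls of $\mathcal{G}_A$ into the arrangement computing $\mathcal{N}(Z)$. The rest is bookkeeping with normal fans, together with the support-function (deformation-cone) argument for weak Minkowski summands. It is worth fixing the convention that $E_A=\sum\{[0,u]\,:\,u\ \text{a circuit of}\ A\}$ ranges over all circuits, both signs, so that $E_A$ is centrally symmetric and the memberships $w\in E_A$ and $-w\in E_A$ — hence the very statement $u\in E_A\cap\Z^n$ — carry no translation ambiguity.
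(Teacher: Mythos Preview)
The paper is a historical survey and does \emph{not} supply a proof of this theorem; it merely records the statement with a citation to \cite{S91}. So there is no in-paper argument to compare against.

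That said, your argument is sound and is essentially the natural proof. The crucial observation---that the Carath\'eodory argument behind Theorem~\ref{thm:degree} actually pins every reduced Gr\"obner basis exponent vector $w=w_+-w_-$ inside the circuit zonotope $E_A$, not merely bounds its degree---is exactly right, and it is the substantive step. Once you have that, the walls of $\mathcal{G}_A$ sit inside the hyperplane arrangement $\{u^\perp:u\in E_A\cap\Z^n\}$, and your chamber argument (pick $\omega,\omega'$ in the same open cell of $\mathcal{H}$, compare initial ideals via the two reduced Gr\"obner bases, conclude equality) cleanly shows $\mathcal{H}$ refines $\mathcal{G}_A$. The translation to ``Minkowski summand'' via normal fans is standard; just be explicit that this yields the \emph{weak} summand conclusion (some positive multiple of $\mathcal{S}_A$ is a summand of $Z$), which is what the statement intends. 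Your closing remark fixing the convention on $E_A$ so that it is centrally symmetric is a worthwhile clarification.
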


For the second result, recall that a monomial ideal $I$ is radical if and only if its minimal generators are squarefree. Moreover, there is a bijection between the radical monomial ideals in $\C[x_1, \ldots, x_n]$ and simplicial complexes on $n$ vertices: given such a simplicial complex $\Delta$ we can construct the {\it Stanley-Reisner ideal}
$$I_\Delta := \bigcap_{\sigma \in \Delta} \langle x_i \, : i \not \in \sigma \rangle$$
where the intersection runs over all facets of $\Delta$. This can be reversed by computing the irredundant prime decomposition of the radical monomial ideal $I$. Given the matrix $A$, every sufficiently generic $\omega \in \R^n$ gives rise to a triangulation $\Delta_\omega$ of $\mathrm{cone}(A)$ into simplicial cones. We think of $\Delta_\omega$
as a simplicial complex on $n$ vertices where we declare a subset $\sigma \subset [n]$  a 
face of $\Delta_\omega$ if there is a vector $y = (y_1, \ldots, y_d) \in \R^d$
such that $a_i \cdot y = \omega_i$ if $i \in \sigma$ and $a_i \cdot y < \omega_i$ if $i \not \in \sigma$. The triangulation $\Delta_\omega$ is called the {\it regular triangulation} of $A$ induced by $\omega$; see \cite{S96, DSR10}. 
\begin{theorem} \cite[Theorem 3.1]{S91}
Let $\omega$ be a vector in the interior of a Gr\"obner cone $\mathcal{C}$ of the toric ideal $I_A$. The radical of the corresponding initial ideal is the Stanley-Reisner ideal $I_{\Delta_\omega}$ of the regular triangulation $\Delta_\omega$.
\end{theorem}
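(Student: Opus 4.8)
The plan is to pin down the simplicial complex $\Gamma$ for which $\sqrt{\operatorname{in}_\omega(I_A)}=I_\Gamma$ and then match it with $\Delta_\omega$ one face at a time. Because $\omega$ sits in the interior of a Gr\"obner cone, $\operatorname{in}_\omega(I_A)$ is a monomial ideal, so its radical is the Stanley--Reisner ideal of $\Gamma=\{\sigma\subseteq[n] : \text{no power of } x^\sigma \text{ lies in } \operatorname{in}_\omega(I_A)\}$, where $x^\sigma:=\prod_{i\in\sigma}x_i$. Thus the theorem follows once I show $\sigma\in\Gamma\iff\sigma\in\Delta_\omega$ for every $\sigma\subseteq[n]$.

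Next I would make $\operatorname{in}_\omega(I_A)$ explicit. Choosing a term order $\prec$ refining $\omega$ and using the theorem above that a reduced Gr\"obner basis of a toric ideal consists of binomials $x^{\eta_+}-x^{\eta_-}$ with $\eta\in\ker_\Z(A)$, one gets $\operatorname{in}_\omega(I_A)=\operatorname{in}_\prec(I_A)=\langle\, x^{\eta_+} : \eta\in\ker_\Z(A),\ \omega\cdot\eta\ge 0\,\rangle$, where $\eta=\eta_+-\eta_-$ is the decomposition into positive and negative parts; the homogeneity hypothesis of Remark~\ref{rmk:A-matrix} gives $\ker_\Z(A)\cap\R^n_{\le 0}=\{0\}$, so each relevant $\eta_+$ is nonzero. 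Plugging this into the description of $\Gamma$, one finds that $\sigma\notin\Gamma$ exactly when there is $\eta\in\ker_\Z(A)\setminus\{0\}$ with $\operatorname{supp}(\eta_+)\subseteq\sigma$ and $\omega\cdot\eta\ge 0$. On the other side, $\sigma\in\Delta_\omega$ means there is $y\in\R^d$ with $a_i\cdot y=\omega_i$ for $i\in\sigma$ and $a_i\cdot y<\omega_i$ for $i\notin\sigma$; since $\omega$ is generic enough for $\Delta_\omega$ to be an honest simplicial triangulation, any such $\sigma$ additionally has $\{a_i:i\in\sigma\}$ linearly independent. The theorem is now the equivalence of these two conditions on $\sigma$.

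For $\Delta_\omega\subseteq\Gamma$: suppose $\sigma\in\Delta_\omega$ with witness $y$, and suppose an $\eta$ as above existed. Pairing $\eta$ with $y$, using $\operatorname{supp}(\eta_+)\subseteq\sigma$, $A\eta_+=A\eta_-$, and $a_j\cdot y\le\omega_j$ for all $j$, gives
$$\omega\cdot\eta_+ = (A\eta_+)\cdot y = (A\eta_-)\cdot y = \sum_j(\eta_-)_j\,(a_j\cdot y) \le \sum_j(\eta_-)_j\,\omega_j = \omega\cdot\eta_- ,$$
so $\omega\cdot\eta\le 0$; together with $\omega\cdot\eta\ge 0$ this forces equality throughout, and the strict inequalities off $\sigma$ then force $\operatorname{supp}(\eta_-)\subseteq\sigma$ as well. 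Hence $\eta$ is supported on $\sigma$ and lies in the kernel of the injective map $[a_i:i\in\sigma]$, so $\eta=0$, a contradiction. Therefore $\sigma\in\Gamma$.

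The reverse inclusion $\Gamma\subseteq\Delta_\omega$ is the crux, and is where I expect the real work. Suppose $\sigma\notin\Delta_\omega$, i.e.\ the system $\{a_i\cdot y=\omega_i\ (i\in\sigma),\ a_j\cdot y<\omega_j\ (j\notin\sigma)\}$ has no solution $y$. By a theorem of the alternative for mixed linear systems (Motzkin's transposition theorem, equivalently linear programming duality applied to $\max\{\epsilon : a_i\cdot y=\omega_i\ (i\in\sigma),\ a_j\cdot y+\epsilon\le\omega_j\ (j\notin\sigma)\}$, whose optimum is then $\le 0$), there exist scalars $\mu_i$ ($i\in\sigma$, free in sign) and $\nu_j\ge 0$ ($j\notin\sigma$), not all zero, with $\sum_{i\in\sigma}\mu_i a_i+\sum_{j\notin\sigma}\nu_j a_j=0$ and $\sum_{i\in\sigma}\mu_i\omega_i+\sum_{j\notin\sigma}\nu_j\omega_j\le 0$. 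Since $A$ is integral and we may assume $\omega$ rational (a rational vector in the same open Gr\"obner cone produces the same $\operatorname{in}_\omega(I_A)$ and the same $\Delta_\omega$), this certificate may be taken in $\Z$; setting $\eta_i=-\mu_i$ on $\sigma$ and $\eta_j=-\nu_j$ off $\sigma$ yields $\eta\in\ker_\Z(A)\setminus\{0\}$ with $\omega\cdot\eta\ge 0$, and $\eta_j\le 0$ for $j\notin\sigma$ forces $\operatorname{supp}(\eta_+)\subseteq\sigma$ — exactly the witness showing $\sigma\notin\Gamma$. The only delicate points are the rationality reduction and the degenerate branch $\nu=0$ of the alternative (which says $\{a_i:i\in\sigma\}$ is linearly dependent, and still produces a valid $\eta$ after possibly negating $\mu$); once these are dispatched, $\Gamma=\Delta_\omega$ and hence $\sqrt{\operatorname{in}_\omega(I_A)}=I_{\Delta_\omega}$.
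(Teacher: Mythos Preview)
The paper is a historical survey and does not supply a proof of this theorem; the result is simply stated with attribution to \cite{S91} and left at that. There is no argument in the paper for you to be compared against.

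For what it is worth, your approach is correct and is essentially the standard proof one finds in \cite[Chapter~8]{S96}. The identification of the initial complex $\Gamma$ and the translation of ``$\sigma\notin\Gamma$'' into the existence of a kernel vector $\eta$ with $\operatorname{supp}(\eta_+)\subseteq\sigma$ and $\omega\cdot\eta\ge 0$ is right; the one sentence you might add is that when $\omega\cdot\eta=0$ the binomial $x^{\eta_+}-x^{\eta_-}$ is its own $\omega$-initial form and hence lies in the monomial ideal $\operatorname{in}_\omega(I_A)$, forcing both monomials in. The forward inclusion via the witness $y$ is clean, and the reverse inclusion via Motzkin's transposition theorem (equivalently LP duality) is exactly the classical route; your remarks on the rationality reduction and the $\nu=0$ branch are the correct fixes. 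Your use of linear independence of $\{a_i:i\in\sigma\}$ for $\sigma\in\Delta_\omega$ is legitimate since the statement already posits that $\Delta_\omega$ is a triangulation; if you wished to derive this from $\omega$ lying in an open Gr\"obner cone, a Hilbert-function dimension count on $\operatorname{in}_\omega(I_A)$ suffices.
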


It is hard not to call this paper phenomenal. In the meantime, I was continuing to learn from Cox-Little-O'Shea \cite{CLO} as well as from Bernd and Rekha. 
\section{Integer programming} 

I continue using the $d \times n$ integer matrix $A$ under the assumptions of Remark \ref{rmk:A-matrix}. Now consider the 
discrete optimization problem $IP_{A,\omega}(b)$
$$ \mathrm{minimize} \,\,  \omega \cdot x  
\mbox{  subject to  } Ax = b \mbox{ and  } x \in \N^n$$
where $b \in \N(A)$. Solving such a general integer program is NP-hard, and already by early 90s there had been substantial amount of work accumulated on all sorts of methods (but mostly polyhedral) to solve these hard problems; see \cite{Sch86} from those times which is still a crucial reference source. In what follows, the problem gets even harder since we will attempt to solve the above problem for all possible right-hand-side vectors $b$ while fixing $\omega$. I will denote this family of integer programs as $b$
varies by $IP_{A,\omega}$.

A method to accomplish this using Gr\"obner bases of the toric ideal $I_A$ had been outlined in \cite{CT91}:
\begin{enumerate}
    \item Form the ideal 
    $$J_A = \langle x_1 - t^{a_1}, \, x_2 - t^{a_2}, \ldots, x_n - t^{a_n} \rangle \subset \C[t_1, \ldots, t_d, x_1, \ldots, x_n].$$
    \item Compute the reduced Gr\"obner basis
    $\tilde{\mathcal{G}}_{A, \succ}$ of $J_A$ with respect to the elimination term order where $t_j \succ x_i$ for every pair of $j = 1, \ldots, d$ and $i=1, \ldots, n$, and where $x^u \succ x^v$ if $\omega \cdot u > \omega \cdot v$. In the last case if equality occurs use a lexicographic term order to break the tie $x^u \succ x^v$.
    \item Compute the normal form $x^v$ of the monomial $t^b$ with respect $\tilde{\mathcal{G}}_{A, \succ}$. The vector $v$ is the optimal solution. 
\end{enumerate}
Note that, technically, we are not using a reduced Gr\"obner basis of $I_A$ but the reduced Gr\"obner basis $\tilde{\mathcal{G}}_{A,\succ}$ of the (toric!) ideal $J_A$. However, the results of the elimination theory imply that $\mathcal{G}_{A, \succ} = \tilde{\mathcal{G}}_{A,\succ} \cap \C[x_1, \ldots, x_n]$ is a reduced Gr\"obner basis of $I_A$. Another important note here is that once $\mathcal{G}_{A,\succ}$ is computed, one can determine $u \in \N^n$
such that $Au=b$ (still an NP-hard problem) and compute the normal form of $x^u$ with respect to $\mathcal{G}_{A,\succ}$. 

Here came Rekha's first fundamental contribution to the subject. The paper that would become \cite{T95} was completed in early 1994 and I knew its contents in the fall of 1993. She made two observations which tied the Gr\"obner basis theory of toric ideals to integer programming. First, she saw that $\mathcal{G}_{A,\succ}$
forms the unique minimal {\it test set} for our integer programs as $b$ varies \cite[Corollary 2.1.10]{T95}. A test set is any collection $\mathcal{T} \subset \ker_Z(A)$ such that 
\begin{itemize}
 \item[a)] for any non-optimal solution $v$ for any integer program $IP_{A,\omega}(b)$ in the family $IP_{A,\omega}$ there exists $w \in \mathcal{T}$ such that $x^v \succ x^{v-w}$, and
 \item[b)] for the optimal solution $u$ of $IP_{A,\omega}(b)$
 and for all $w \in \mathcal{T}$, $u-w$ is infeasible for $IP_{A,\omega}(b)$.
\end{itemize}
Second, she observed that on the set of feasible solutions of each $IP_{A,\omega}(b)$ one can create a graph $G_{A,\omega, b}$: the vertices of this graph are the feasible solutions of $IP_{A,\omega}(b)$, and there is a directed edge from $v$ to $u$ if 
$x^{(v-u)_+} - x^{(v-u)_-} \in \mathcal{G}_{A,\succ}$.
\begin{theorem} \cite[Theorem 2.1.8]{T95} Each $G_{A,\omega,b}$ is a connected acyclic directed graph whose unique sink is the optimal solution to $IP_{A,\omega}(b)$.
\end{theorem}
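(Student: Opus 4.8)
The plan is to identify the directed edges of $G_{A,\omega,b}$ with the single reduction steps of the division algorithm modulo $\mathcal{G}_{A,\succ}$, and then to read off all three assertions from elementary properties of the term order $\succ$ together with the fact that $\mathcal{G}_{A,\succ}$ is a Gr\"obner basis of $I_A$.

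First I would make this dictionary precise. Write each element of $\mathcal{G}_{A,\succ}$ as $g = x^{a} - x^{b}$ with $x^{a}$ the initial term, so that $\mathrm{supp}(a) \cap \mathrm{supp}(b) = \emptyset$ and $Aa = Ab$. Given a vertex $v$ (that is, $v \in \N^n$ with $Av = b$), the unique candidate $u$ with $x^{(v-u)_+} - x^{(v-u)_-} = g$ is $u = v - a + b$: since $a$ and $b$ have disjoint support, $v - u = a - b$ forces $(v-u)_+ = a$ and $(v-u)_- = b$. This $u$ lies in $\N^n$ if and only if $x^{a}$ divides $x^{v}$, and then $Au = Av = b$ holds automatically, so $u$ is again a vertex. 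Hence a directed edge $v \to u$ exists precisely when some initial monomial $x^{a}$ of $\mathcal{G}_{A,\succ}$ divides $x^{v}$ and $u$ records the corresponding one-step reduction $x^{v} \to x^{b}\cdot x^{v}/x^{a}$. In particular $v$ is a sink if and only if no initial monomial of $\mathcal{G}_{A,\succ}$ divides $x^{v}$, i.e. $x^{v}$ is a standard monomial. Also note the vertex set is finite: since $\mathrm{cone}(A)$ is pointed (Remark \ref{rmk:A-matrix}), the polytope $\{x \in \R^n : x \geq 0, \, Ax = b\}$ is bounded.

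Acyclicity is then immediate: along an edge $v \to u$ we have $x^{v} = x^{a}\cdot(x^{v}/x^{a}) \succ x^{b}\cdot(x^{v}/x^{a}) = x^{u}$, because $\succ$ is multiplicative and $x^{a} \succ x^{b}$. So every directed edge strictly decreases the vertex monomial in the well-order $\succ$, which rules out infinite directed walks and, a fortiori, directed cycles; moreover, since $\succ$ refines the weight $\omega$, every edge satisfies $\omega \cdot u \leq \omega \cdot v$. For the sink statement, termination of the division algorithm (equivalently, finiteness plus acyclicity) gives from every vertex a directed path to some sink, so there is at least one sink, and the underlying undirected graph is connected because every vertex is joined to a sink. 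Uniqueness of the sink uses the Gr\"obner basis property: if $v, v'$ are both sinks then $x^{v}, x^{v'}$ are standard monomials with $Av = Av' = b$, so $x^{v} - x^{v'} \in I_A$; its normal form modulo $\mathcal{G}_{A,\succ}$ is $0$, but a difference of standard monomials is already a normal form, forcing $x^{v} = x^{v'}$. Finally, for any feasible $u$, following a directed path to the unique sink $v^{\star}$ keeps $\omega \cdot x$ nonincreasing, whence $\omega \cdot v^{\star} \leq \omega \cdot u$; and among the $\omega$-minimizers $v^{\star}$ is the lexicographically smallest, again because $\succ$ is the $\omega$-weight order refined lexicographically. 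Thus $v^{\star}$ is the optimal solution of $IP_{A,\omega}(b)$.

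The only genuinely nontrivial input is the uniqueness of normal forms, i.e. that $\mathcal{G}_{A,\succ}$ is a genuine Gr\"obner basis of $I_A$ — which is exactly where Buchberger's criterion, or the elimination-theoretic remark relating $\mathcal{G}_{A,\succ}$ to the reduced Gr\"obner basis of $J_A$, enters; everything else is bookkeeping about term orders and the fact that $\succ$ refines $\omega$. The two small points worth checking with care are the edge/reduction dictionary (in particular that $(v-u)_+ = a$ and $(v-u)_- = b$ are forced, using disjointness of supports and nonnegativity of $u$ and $v$) and the finiteness of the vertex set, which rests on the pointedness assumption of Remark \ref{rmk:A-matrix}.
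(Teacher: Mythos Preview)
The paper does not actually prove this theorem; it is merely quoted from \cite{T95} with no argument given, so there is nothing to compare against.  Your proof is correct and is, in fact, the standard one: the dictionary ``directed edge $=$ one-step reduction modulo $\mathcal{G}_{A,\succ}$'' is set up carefully (the disjoint-support observation is exactly what makes $(v-u)_+$ the leading exponent), acyclicity follows from the strict $\succ$-decrease along edges, the unique sink is the unique standard monomial in the fiber by uniqueness of normal forms, and connectedness comes from every vertex reducing to that sink.

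Two very minor remarks.  First, finiteness of the vertex set is not needed for the argument: termination of reduction follows already from well-foundedness of $\succ$, so every vertex reaches a sink even if the fiber were infinite.  (It \emph{is} finite under the standing homogeneity assumption of Remark~\ref{rmk:A-matrix}, since the all-ones row pins down $\sum_i x_i$; pointedness of $\mathrm{cone}(A)$ alone is a slightly roundabout justification.)  Second, your reading of the edge definition---that $x^{(v-u)_+}$ is the \emph{leading} monomial of the Gr\"obner basis element---is the only one consistent with the theorem, and matches the convention fixed just before the definition of Gr\"obner cones; it might be worth saying this explicitly, since otherwise the edge relation would be symmetric.
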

Finally, this paper contains a re-interpretation of the usual Buchberger's algorithm \cite{B70} in the case of toric ideals, stripping away the algebra of ideals and viewing a binomial $x^u- x^v$ as just $u-v$, an element of $\Z^n$. This was called the {\it geometric Buchberger's algorithm}. 

By the time I graduated in 1997, Rekha published multiple papers in quick succession extending the Gr\"obner basis technology in integer programming; see \cite{TTN95, TW96, ST97, TW97, HT98}. The one that stands out among these is {\it Variations of cost functions in integer programming} \cite{ST97}, co-authored with Bernd. The contents of this article were also in the air during late 1993, and the manuscript was completed in 1994. Among other things, this paper introduced the {\it Graver basis} of $I_A$ (or $A$). 
\begin{definition}
Let $\mathcal{H}_\rho$ be the Hilbert basis of the cone $H_\rho$.
The set of binomials 
$$Gr_A = \{x^u-x^v \, : \, u-v \in \bigcup_{\rho} \mathcal{H}_\rho\}$$ 
is called the Graver basis of $I_A$.
\end{definition}
Recall that the union of all reduced Gr\"obner bases of an ideal $I$ is called the {\it universal Gr\"obner basis} (since it is a Gr\"obner basis of $I$ for any term order). We will denote the universal Gr\"obner basis of $I_A$ with $\mathcal{U}_A$.
\begin{theorem} \cite[Theorem 2.7]{ST97} The Graver basis $Gr_A$ contains the universal Gr\"obner basis $\mathcal{U}_A$.   
\end{theorem}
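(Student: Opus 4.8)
The plan is to recognize $Gr_A$ as the set of binomials $x^{u_+}-x^{u_-}$ for which $u=u_+-u_-$ is \emph{conformally primitive}. Write $w\sqsubseteq u$ to mean $w_iu_i\ge 0$ and $\abs{w_i}\le\abs{u_i}$ for all $i$ (equivalently $w_+\le u_+$ and $w_-\le u_-$ componentwise), and call a nonzero $u\in\ker_\Z(A)$ conformally primitive if the only nonzero element of $\ker_\Z(A)$ that is $\sqsubseteq u$ is $u$ itself. The first step is the routine observation that $\bigcup_\rho\mathcal{H}_\rho$ is precisely the set of conformally primitive vectors: inside a fixed orthant $\R_\rho^n$ there is no cancellation, so for $u\in H_\rho$ a decomposition $u=w+(u-w)$ into two nonzero elements of $H_\rho$ is the same datum as a nonzero $w\sqsubseteq u$ with $w\ne u$. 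Hence it suffices to show that every element of every reduced Gr\"obner basis $\mathcal{G}_{A,\succ}$, read as a vector in $\ker_\Z(A)$, is conformally primitive; since $\mathcal{U}_A$ is the union of all such $\mathcal{G}_{A,\succ}$, this yields $\mathcal{U}_A\subseteq Gr_A$.

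So I would fix a reduced Gr\"obner basis $\mathcal{G}_{A,\succ}$ and an element $x^{v_+}-x^{v_-}\in\mathcal{G}_{A,\succ}$ with $x^{v_+}$ the $\succ$-initial term, and assume for contradiction that there is a nonzero $w\in\ker_\Z(A)$ with $w\sqsubseteq v$ and $w\ne v$. By the binomial characterization of $I_A$ quoted above, $x^{w_+}-x^{w_-}\in I_A$; by pointedness of $\mathrm{cone}(A)$ (Remark~\ref{rmk:A-matrix}) both $w_+$ and $w_-$ are nonzero, so this is an honest nonconstant binomial, and $w\sqsubseteq v$ gives $x^{w_+}\mid x^{v_+}$ and $x^{w_-}\mid x^{v_-}$. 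One of the two monomials $x^{w_+},x^{w_-}$ is the $\succ$-initial term of $x^{w_+}-x^{w_-}$ and hence lies in $\mathrm{in}_\succ(I_A)$.

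The next step is a short case split. If $x^{w_-}\in\mathrm{in}_\succ(I_A)$, then some initial term of $\mathcal{G}_{A,\succ}$ divides $x^{w_-}$ and therefore divides $x^{v_-}$; but $x^{v_-}$ is the tail of a reduced Gr\"obner basis element, hence a standard monomial --- a contradiction. If instead $x^{w_+}\in\mathrm{in}_\succ(I_A)$, then some initial term of $\mathcal{G}_{A,\succ}$ divides $x^{w_+}$ and hence $x^{v_+}$; since $x^{v_+}$ is itself a minimal generator of $\mathrm{in}_\succ(I_A)$ this forces $x^{w_+}=x^{v_+}$, i.e.\ $w_+=v_+$. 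In this last case $Aw=Av=0$ gives $Aw_-=Aw_+=Av_+=Av_-$, so $x^{v_-}-x^{w_-}\in I_A$; writing $x^{v_-}=x^{w_-}x^{v_--w_-}$ and factoring out $x^{w_-}$ leaves $x^{v_--w_-}-1\in I_A$, which by Remark~\ref{rmk:A-matrix} is possible only when $v_-=w_-$. Combined with $w_+=v_+$ this gives $w=v$, the desired contradiction, so $v$ is conformally primitive.

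I expect the only genuinely delicate point to be this last case $w_+=v_+$: conformal domination by itself does not make $x^{w_+}$ a \emph{proper} divisor of the initial term $x^{v_+}$, and one really has to invoke pointedness of $\mathrm{cone}(A)$ (equivalently, that $I_A$ contains no binomial $x^u-1$) to rule it out. Everything else is bookkeeping with two standard properties of a reduced Gr\"obner basis: its initial terms are the minimal generators of the initial ideal, and its tails are standard monomials.
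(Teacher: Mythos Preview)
Your proof is correct. The paper does not actually supply a proof of this theorem --- it merely states the result with a citation to \cite{ST97}. However, the essential mechanism you use (if $v$ were not conformally primitive, a conformal summand $w$ would yield $x^{w_+}\mid x^{v_+}$ and $x^{w_-}\mid x^{v_-}$, contradicting reducedness) is exactly the device the paper deploys in its proof of Theorem~\ref{thm:degree}, so your approach is fully in line with the paper's treatment of the surrounding material. You are in fact more careful than the paper is there: in the proof of Theorem~\ref{thm:degree} the divisibilities are simply declared a contradiction to reducedness, whereas you correctly split on which of $x^{w_+},x^{w_-}$ is initial and dispose of the boundary case $w_+=v_+$ using pointedness of $\mathrm{cone}(A)$.
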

\begin{theorem} \cite[Algorithm 4.5]{ST97}  \label{thm:graver} Let 
$$ \Lambda(A) = \begin{bmatrix}
    A & 0 \\ I_n & I_n
\end{bmatrix}$$
be the Lawrence lifting of $A$. Let $\mathcal{G}_{\Lambda(A)}$ be the reduced Gr\"obner basis of $I_{\Lambda(A)}$ with respect to any term order. Then $x^u - x^v$ is in $Gr_A$ if and only if $x^uy^v - x^vy^u$ is in $\mathcal{G}_{\Lambda(A)}$.  
\end{theorem}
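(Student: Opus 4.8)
The plan is to exploit the very special shape of $\ker_\Z(\Lambda(A))$. A direct computation gives $\ker_\Z(\Lambda(A)) = \{(u,-u) \, : \, u \in \ker_\Z(A)\}$, since $\Lambda(A)\binom{u}{v} = \binom{Au}{u+v}$ vanishes exactly when $Au = 0$ and $v = -u$. In particular $\Lambda(A)$ has full row rank $d+n$, its column cone is pointed, and $(1,\ldots,1)$ is in its row span (the sum of the last $n$ rows), so $I_{\Lambda(A)}$ is a homogeneous toric ideal and the usual Gr\"obner machinery applies. I would first observe that every binomial $x^ay^b - x^cy^d \in I_{\Lambda(A)}$ whose two monomials are \emph{coprime} has the form $\beta(u) := x^{u_+}y^{u_-} - x^{u_-}y^{u_+}$ with $u \in \ker_\Z(A)$: from $(a-c,b-d) = (u,-u)$ together with $\min(a_i,c_i) = \min(b_i,d_i) = 0$ one reads off $a = u_+$, $c = u_-$, $b = u_-$, $d = u_+$ coordinate by coordinate according to the sign of $u_i$. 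Since the two monomials of any element of a reduced Gr\"obner basis of a toric ideal are coprime (otherwise dividing out their common monomial factor would produce a binomial of $I_{\Lambda(A)}$ whose leading term properly divides that of our element), every reduced Gr\"obner basis of $I_{\Lambda(A)}$, for \emph{any} term order, consists of binomials $\beta(u)$ with $u \in \ker_\Z(A)\setminus\{0\}$. The whole content of the theorem is therefore to pin down which $u$ occur, and I claim the answer is: $\beta(u)$ lies in the reduced Gr\"obner basis (for any fixed term order) if and only if $u$ is $\sqsubseteq$-minimal among the nonzero vectors of $\ker_\Z(A)$, where $w \sqsubseteq u$ means $w$ is conformal to $u$ (each $w_i$ is $0$ or has the sign of $u_i$, and $|w_i| \le |u_i|$). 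The latter condition is exactly the statement that $x^{u_+} - x^{u_-} \in \mathcal{H}_\rho \subset Gr_A$ for the sign pattern $\rho$ of $u$, because inside a single $H_\rho$ every sum decomposition into elements of $H_\rho$ is automatically conformal.

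The engine is the following elementary remark, forced by the mirrored $y$-block of $\Lambda(A)$: the monomial $x^{w_+}y^{w_-}$ divides $x^{u_+}y^{u_-}$ if and only if $w_+ \le u_+$ and $w_- \le u_-$ coordinatewise, i.e. if and only if $w \sqsubseteq u$; and in that case $u_+ = w_+ + (u-w)_+$ and $u_- = w_- + (u-w)_-$, so no cancellation occurs when the corresponding monomials are multiplied. For the direction ``$\beta(u)$ in the reduced Gr\"obner basis $\Rightarrow$ $u$ is $\sqsubseteq$-minimal'', suppose instead $u = u' + u''$ conformally with $u',u''$ nonzero; say $x^{u_+}y^{u_-}$ is the initial term of $\beta(u)$. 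If neither $x^{u'_+}y^{u'_-}$ nor $x^{u''_+}y^{u''_-}$ were the initial term of its own binomial $\beta(u'), \beta(u'') \in I_{\Lambda(A)}$, then $x^{u'_-}y^{u'_+} \succ x^{u'_+}y^{u'_-}$ and $x^{u''_-}y^{u''_+} \succ x^{u''_+}y^{u''_-}$; multiplying these two relations (a term order is multiplicative) yields $x^{u_-}y^{u_+} \succ x^{u_+}y^{u_-}$, contradicting the choice of initial term. Hence, say, $x^{u'_+}y^{u'_-}$ is an initial term, and by the remark it properly divides $x^{u_+}y^{u_-}$ (properly, since $u'' \ne 0$ forces $u' \ne u$), so $x^{u_+}y^{u_-}$ is not a minimal generator of $\mathrm{in}(I_{\Lambda(A)})$ and $\beta(u)$ cannot be in the reduced Gr\"obner basis.

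For the converse, ``$u$ $\sqsubseteq$-minimal $\Rightarrow$ $\beta(u) \in \mathcal{G}_{\Lambda(A)}$'', suppose not; with $x^{u_+}y^{u_-}$ the initial term, $x^{u_+}y^{u_-}$ must be divisible by the initial term $x^{w_+}y^{w_-}$ of some $g = \beta(w) \in \mathcal{G}_{\Lambda(A)}$, and the division is proper, for if the two monomials coincided then $w = u$, hence $g = \beta(u)$, contradicting $\beta(u) \notin \mathcal{G}_{\Lambda(A)}$. By the remark, $0 \ne w \sqsubseteq u$ with $w \ne u$, so $u = w + (u-w)$ exhibits $u$ as a sum of two nonzero vectors both conformal to $u$, hence both in $\R_\rho^n \cap \ker_\Z(A) = H_\rho$ for the sign pattern $\rho$ of $u$; this contradicts $u \in \mathcal{H}_\rho$. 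Since the criterion ``$u$ is $\sqsubseteq$-minimal'' does not mention the term order, this argument simultaneously shows that $\mathcal{G}_{\Lambda(A)}$ is independent of the term order and equals $\{\beta(u) \, : \, x^{u_+} - x^{u_-} \in Gr_A\}$, which is the assertion. I expect the main obstacle to be nothing deep but rather keeping the bookkeeping of supports, signs, and conformality airtight — in particular the multiplicativity step and the insistence that the divisibilities above are \emph{proper}.
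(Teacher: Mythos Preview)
The paper does not supply a proof of this theorem; it merely records the statement with a citation to \cite{ST97} (and, implicitly, to Chapter~7 of \cite{S96}). So there is no ``paper's own proof'' to compare against. Your argument is correct and is precisely the standard one: identify $\ker_\Z(\Lambda(A))$ with $\{(u,-u):u\in\ker_\Z(A)\}$, observe that the primitive binomials of $I_{\Lambda(A)}$ are exactly the $\beta(u)=x^{u_+}y^{u_-}-x^{u_-}y^{u_+}$, and then show via the divisibility/conformality dictionary that the reduced Gr\"obner basis (for any term order) picks out exactly the conformally minimal $u$, i.e.\ the Graver basis. One small point worth making explicit in your write-up: when you take ``the initial term $x^{w_+}y^{w_-}$ of some $g=\beta(w)\in\mathcal{G}_{\Lambda(A)}$'' you are silently replacing $w$ by $-w$ if necessary so that $x^{w_+}y^{w_-}$ really is the leading monomial; this is harmless since $\beta(-w)=-\beta(w)$, but saying so avoids a reader stumbling. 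With that caveat, both directions go through and the term-order independence of $\mathcal{G}_{\Lambda(A)}$ drops out as you note.
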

Along the same lines, this article also proved that every binomial corresponding to a circuit of $A$ is in $\mathcal{U}_A$ \cite[Lemma 2.6]{ST97}, and if $A$ is a unimodular matrix then $\mathcal{U}_A$ consists of only the circuits \cite[Theorem 2.10]{ST97}. Finally, there was a close look at the Gr\"obner fan $\mathcal{G}_A$ and the state polytope $\mathcal{S}_A$. In particular, there was an intriguing and very concrete conjecture.
\begin{conjecture} \label{conj:3/4} \cite[Conjecture 6.2]{ST97} Let $A$ be an $d \times n$ matrix where $n-d = 3$. Then every maximal cone in the Gr\"obner fan $\mathcal{G}_A$ has three or  four facets. 
\end{conjecture}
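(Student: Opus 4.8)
The plan is to translate the claim into one about the vertices of the state polytope $\mathcal{S}_A$, and then to pin $\mathcal{S}_A$ down via the secondary fan of $A$, which the Gr\"obner fan $\mathcal{G}_A$ refines. First I would reduce the dimension. Since $I_A$ is homogeneous with $\rank(A)=d$, the initial ideal $\mathrm{in}_\omega(I_A)$ depends on $\omega$ only through its restriction to $\ker_\Z(A)$, so $\mathcal{G}_A$ lives in the $3$-dimensional space $\ker_\Z(A)\otimes\R$; being there a complete fan, and pointed because $\omega\cdot v\ge 0$ for all $v\in\ker_\Z(A)$ already forces $\omega$ to annihilate $\ker_\Z(A)$, it is the normal fan of a $3$-dimensional polytope, namely $\mathcal{S}_A$. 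For a vertex $p$ of $\mathcal{S}_A$ the facets of the maximal Gr\"obner cone $\mathcal{C}$ corresponding to $p$ are in bijection with the edges of $\mathcal{S}_A$ at $p$; since every vertex of a $3$-polytope has at least three edges, Conjecture~\ref{conj:3/4} is equivalent to the assertion that \emph{no vertex of $\mathcal{S}_A$ has degree $\ge 5$}.

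To count, fix $\omega$ in the relative interior of $\mathcal{C}$, put $M=\mathrm{in}_\omega(I_A)$, and write its reduced Gr\"obner basis as $\{x^{u_i^+}-x^{u_i^-}\}_{i=1}^{k}$ with leading terms $x^{u_i^+}$, so that the vectors $u_i:=u_i^+-u_i^-\in\ker_\Z(A)\cong\Z^3$ satisfy $\mathcal{C}=\bigcap_{i=1}^k\{\omega:\omega\cdot u_i\ge 0\}$. Every facet of $\mathcal{C}$ then lies on some $u_i^\perp$, and the conjecture says that the dual cone $\mathcal{C}^\vee=\mathrm{cone}\{\,u_i: u_i^\perp\text{ is a facet of }\mathcal{C}\,\}\subset\R^3$ has at most four extreme rays. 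One is tempted to conclude this directly from the state-polytope theorem, $\mathcal{S}_A$ being a Minkowski summand of the zonotope generated by the segments $[u_+,u_-]$ for $u\in E_A\cap\Z^n$; but this only places the edge directions of $\mathcal{S}_A$ among finitely many directions in $\ker_\Z(A)$, and a Minkowski summand of a general three-dimensional zonotope (indeed, the zonotope itself) can have a vertex of degree five. So the toric structure has to be used more substantially.

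The substantive input should be \cite[Theorem 3.1]{S91}: all $\omega$ in $\mathcal{C}$ have the same initial ideal $M$, hence the same radical $\sqrt{M}=I_{\Delta_\omega}$, hence the same regular triangulation $\Delta$, so $\mathcal{C}$ is a subcone of the secondary cone $\sigma_\Delta$ of $\Delta$; and since $n-d=3$ the secondary polytope of $A$ is itself $3$-dimensional. The facets of $\mathcal{C}$ now split into those lying on a facet hyperplane of $\sigma_\Delta$ — which record bistellar flips of $\Delta$, hence circuits of $A$ — and those meeting the interior of $\sigma_\Delta$, which record further monomial initial ideals of $I_A$ with the same radical $I_\Delta$. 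The statement to prove is that these two kinds together number at most four; this already has content for the first kind alone, since $\sigma_\Delta$ can have arbitrarily many facets, so whenever it has five or more the Gr\"obner fan must genuinely subdivide it, and the count becomes a delicate balance between the two kinds.

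The hard part is exactly this uniform bound. To get it one must understand both the possible shapes of a $3$-dimensional secondary cone $\sigma_\Delta$ — amenable, in principle, to a case analysis through the Gale diagram of the columns of $A$ — and the fan that the initial ideals with a fixed radical cut out inside it, which I would track through the standard-pair decomposition of $M$ and how a standard pair is created or destroyed across an internal wall. This is also where the arithmetic of $A$ has to be used in earnest: circuits have support of size at most $d+1$, the Lawrence-lifting rigidity behind Theorem~\ref{thm:graver} constrains which binomials can occur, and the generating segments of the zonotope of the previous paragraph are far from general position. I expect the bound of four to emerge only once the internal walls too are described in terms of circuits, or of Graver elements of small support; the absence of such a description is presumably why this has remained a conjecture rather than a theorem.
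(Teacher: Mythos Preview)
The conjecture is \emph{false}, so there can be no correct proof. The paper does not prove Conjecture~\ref{conj:3/4}; it reports the counterexample from \cite{HM02}: for the $1\times 4$ matrix $A=[15\ 247\ 248\ 345]$ (so $n-d=3$), the Gr\"obner fan $\mathcal{G}_A$ contains a maximal cone, at $\omega=(111,0,341,1)$, with five facets. Later examples push this to six, and via supernormal configurations \cite{HMS04} to fifteen.

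Your write-up is honest about where the argument stalls, and that honesty is well placed: the step you flag as ``the hard part'' --- bounding the number of internal walls plus flip walls of a secondary cone by four --- is exactly the step that fails. Your earlier observation that a Minkowski summand of a three-dimensional zonotope can have a vertex of degree five was already a warning sign; the toric structure does not rescue the bound. In particular, the hoped-for description of internal walls ``in terms of circuits, or of Graver elements of small support'' cannot yield a universal bound of four, because the counterexamples above show no such bound exists. The correct resolution of the conjecture is not a proof but a counterexample, and you should replace the attempted argument with one.
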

Bernd and Rekha called this the ``3/4 conjecture". I liked this conjecture but I had no idea how to systematically approach it. Below, I will come back to its resolution.  Rekha was determined to make progress in computing Gr\"obner fans of toric ideals. She came back to it in the first half of the naughts. Using a reverse search enumeration technique, she implemented a {\tt MAPLE} code called {\tt TiGERS} (Toric Gr\"obner bases Enumeration using Reverse Search) to compute Gr\"obner fans. Then Birkett Huber, another student of Bernd from Cornell, joined her in this project and {\tt TiGERS} became even faster in its new implementation in {\tt C} \cite{HT00}. It could compute Gr\"obner fans with more than $200,000$ maximal cones. Two other papers followed \cite{FJLT07, FJT07} with Anders Jensen, Rekha's PhD student  among the co-authors. Anders produced an even better implementation of reverse enumeration and Gr\"obner walk in his code {\tt Gfan} \cite{gfan} which is still the current state of the art. {\tt Gfan} is available as a stand-alone program as well as a Macaulay 2 \cite{M2} package.

Eventually, I was involved in the resolution of Conjecture \ref{conj:3/4}. Bernd had moved to UC Berkeley in 1995 and I was a postdoc at MSRI (now SLMath) during 1997/98. During that academic year and beyond, I collaborated in a project with Diane Maclagan, one of Bernd's first students from Berkeley,  where we looked at a particular monomial ideal that we called the {\it vertex ideal} of $A$. While studying associated primes of vertex ideals, we constructed an interesting example associated to the corank 3 matrix $A = [15 \, 247 \, 248 \, 345]$. Gr\"obner fans were not on our mind at all. However, Diane happened to decide to compute the Gr\"obner fan of $I_A$ using {\tt TiGERS}; just for fun, I guess. 
\begin{theorem} \cite[Theorem 3.4]{HM02} Conjecture \ref{conj:3/4} is false. The Gr\"obner fan $Gr_A$
of $A= [15 \, 247 \, 248 \, 345]$
has a maximal cone associated to 
$\omega = (111, 0, 341, 1)$ that has five facets.
\end{theorem}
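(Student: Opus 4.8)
The matrix $A = [15\ 247\ 248\ 345]$ is $1 \times 4$, so $d = 1$, $n = 4$, and $n - d = 3$; thus $\ker_\Z(A)$ is a rank-$3$ sublattice of $\Z^4$, and the Gr\"obner fan $\mathcal{G}_A$ is a complete fan in $\R^4$ whose lineality space is the row span of $A$, i.e.\ $\R\cdot(15,247,248,345)$ (adding a multiple of this vector to a weight changes no initial form of a binomial in $I_A$). Hence every maximal cone of $\mathcal{G}_A$, viewed in the quotient $\R^4/\mathrm{rowspan}(A)\cong\R^3$, is a full-dimensional three-dimensional polyhedral cone, and ``having five facets'' means this cone is combinatorially a pentagonal cone. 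The plan is: (1) compute the reduced Gr\"obner basis of $I_A$ singled out by $\omega = (111,0,341,1)$; (2) read off its Gr\"obner cone $\mathcal{C}$; (3) check that $\omega$ lies in the interior of $\mathcal{C}$, so that $\mathcal{C}$ really is a maximal cone of $\mathcal{G}_A$; and (4) show that $\mathcal{C}$ has exactly five facets.

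For step (1) I would fix a term order $\succ$ refining $\omega$ (breaking ties with, say, the lexicographic order with $x_1 \succ x_2 \succ x_3 \succ x_4$) and compute the reduced Gr\"obner basis $\mathcal{G} = \{x^{u_i^{+}} - x^{u_i^{-}} : i = 1,\ldots,k\}$ of $I_A$ with respect to $\succ$, with the convention that $x^{u_i^{+}}$ is the initial term; by the theorem that reduced Gr\"obner bases of $I_A$ consist of binomials, $\mathcal{G}$ has exactly this shape. Concretely one can run the Conti--Traverso elimination route of Section~3 --- form $J_A \subset \C[t,x_1,\ldots,x_4]$, compute its reduced Gr\"obner basis for the appropriate elimination order, and intersect with $\C[x_1,\ldots,x_4]$ --- or equivalently the geometric Buchberger algorithm on $\ker_\Z(A)$ (the engine of {\tt TiGERS}). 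Termination is immediate, and by Theorem~\ref{thm:degree} every element of $\mathcal{G}$ has degree at most $(n-d)(d+1)D(A) = 3\cdot 2\cdot 345 = 2070$, so the output is an explicit, finite, independently verifiable list of binomials.

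For steps (2)--(4), the Gr\"obner cone is $\mathcal{C} = \{\eta \in \R^4 : \eta\cdot(u_i^{+} - u_i^{-}) \ge 0,\ i = 1,\ldots,k\}$. One verifies that $\eta = \omega$ satisfies all of these inequalities strictly; this certifies both that $\mathcal{C}$ is the cone of $\mathcal{G}_A$ containing $\omega$ and that $\omega$ is generic, so $\mathcal{C}$ is full-dimensional modulo $\mathrm{rowspan}(A)$. It then remains to pass to $\R^4/\mathrm{rowspan}(A) \cong \R^3$ and extract the irredundant subsystem: an inequality $\eta\cdot(u_i^{+} - u_i^{-}) \ge 0$ is facet-defining exactly when it is not implied by the others, a question settled by a small linear program for each $i$. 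The claim is that exactly five inequalities survive. To certify this I would display the five facet normals (five distinguished vectors $u_i^{+} - u_i^{-}$ coming from $\mathcal{G}$) together with the five extreme rays of the three-dimensional cone, check the incidence pattern --- each extreme ray lying on exactly two of the five facet hyperplanes, so the boundary closes up into a pentagon --- and, for each remaining binomial, solve the linear program maximizing its slack over $\mathcal{C}$ and confirm that this maximum is identically zero, so no further facet appears.

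The main obstacle is not any single clever step but the simultaneous certification in step (4): one must show at once that all five purported facet inequalities are irredundant \emph{and} that there is no sixth facet, i.e.\ that the three-dimensional Gr\"obner cone is genuinely a pentagonal cone and not, say, a quadrilateral cone carrying a redundant fifth inequality. This is exactly the point at which the ``3/4 conjecture'' resisted a systematic attack: the counterexample first had to be located --- and here it surfaced incidentally from the vertex-ideal computations of \cite{HM02} rather than from any search aimed at $\mathcal{G}_A$ --- and then the pentagon had to be pinned down precisely. Once the explicit reduced Gr\"obner basis for the pair $(A,\omega)$ is in hand, the rest is a routine and machine-checkable convex-geometry computation.
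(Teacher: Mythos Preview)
Your proposal is correct and matches the paper's approach: the theorem is a computational fact, and the paper does not supply an argument in the text --- it simply cites \cite{HM02} and notes that the example was discovered by running \texttt{TiGERS} on $I_A$. Your outline (compute the reduced Gr\"obner basis for $\omega$, write down the Gr\"obner cone, reduce modulo the lineality space, and certify by linear programming that exactly five inequalities are irredundant) is precisely the verification one performs, and is what any software-based proof amounts to.
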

We even generated another example with six facets \cite[Example 3.3]{HM02}. A little later Bernd joined us to write a paper on {\it supernormal vector configurations} \cite{HMS04}. As a consequence of this work, examples of Gr\"obner cones of dimension three were found which had as many as $15$ facets. 

\section{Computing Gr\"obner bases of toric ideals} 

As 1993 came to a close two new developments happened. First, Bernd and Rekha, jointly with Jes\'us De Loera (Jes\'us was also Bernd's student, junior to Rekha but senior to me), combinatorially described a reduced Gr\"obner basis of the {\it second hypersimplex} $A_d$ for any $d$ 
\cite[Theorem 2.1]{DST95}. Here the matrix $A_d$ has $d$ rows and $n = \binom{d}{2}$ columns where 
the columns are formed by all $0/1$
vectors in $\R^d$ with exactly two ones. This was a first example of a family of toric ideals associated
to non-unimodular matrices for which one could describe a reduced Gr\"obner basis. 

More importantly (in hindsight!), Bernd was working on his seminal paper with Persi Diaconis where they  introduced the {\it Markov basis} (aka minimal generating set) of a toric ideal $I_A$ to run Monte Carlo Markov chains for sampling purposes \cite{DS98}. This paper was not published until 1998 and it arguably marks the true beginning of the field of {\it algebraic statistics}. I am not going to dwell on this topic since the chapter in this volume by Seth Sullivant does a superb job in telling that story. In any case, the explosion of the work based on \cite{DS98} had to wait for almost another decade. 

I was motivated by the optimism that was shared by Bernd, Jes\'us, and Rekha, that Gr\"obner bases of toric ideals should be part of the toolbox for solving integer programs. Of course, one had to first compute such Gr\"obner bases, and besides the elimination-based approach there was no other method. {\tt MAPLE} had an implementation of the Buchberger's algorithm for general polynomial ideals. {\tt Macaulay}, the precursor to {\tt Macaulay 2} \cite{M2}, was already around with its earlier versions (version 3.0 for {\tt Linux} came out in September 1994). It had a fast implementation of Buchberger's, but again for general polynomial ideals. There was a  need for a fast implementation of the geometric Buchberger's algorithm for toric ideals. 

Before the end of the semester, Bernd invited me for a meeting to his office in White Hall. He patiently went over the advantages for a stand-alone implementation. For instance, computing the normal form of a monomial $x^v$ with respect to a partial Gr\"obner basis, a crucial ingredient in the Buchberger's algorithm, could be made much more efficient: if such a basis had a binomial $x^w - x^u$ where the leading term is $x^w$ and it divided $x^v$, then a one-step reduction would produce $x^{v-w+u}$. But maybe the same binomial could be applied multiple times for a longer chain of reductions. So instead, one could determine the largest $k$ such that $v - k(w-u)$ did not have a negative entry and reduce $x^v$ 
to $x^{v-k(w-u)}$ in one shot. Bernd demonstrated these ideas on his SUN workstation (named ``Grassmann") running {\tt UNIX}. We brainstormed similar ideas, and he convinced me that it was worth the effort to produce an implementation. I had experience in coding algorithms (mostly network flow and matching algorithms) in {\tt C} in my undergrad days and I thought I could do it.   

I convinced Lisa Fleischer, another second year student from the OR Department taking Bernd's course on Gr\"obner bases, to join me in the effort. We spent the entire winter break coding and refining and improving. Our first goal was to implement the geometric Buchberger's algorithm for a toric ideal $I_A$ starting from a given generating set of $I_A$. Already by the end of the break Lisa was losing interest but I kept going. When I felt ready, Bernd (and Hyungsook) invited me for dinner over at their house. Rekha was there, too. Bernd asked to try the code. I dialed in into the workstation that I was working with from Bernd's home computer. It is perfectly possible that I am misremembering this, but I think he wanted to compute the Graver basis $Gr_A$ of the ideal in Example \ref{ex:3x3x3} using Theorem \ref{thm:graver}. We set up the input and pushed the button. It took couple of minutes,
way slow in today's standards, but
it produced the correct output, something Bernd could not get {\tt MAPLE} or {\tt Macaulay} to do. He said something to the effect that he was impressed.  

Throughout the spring 1994 I kept refining and optimizing the code with all the bells and whistles. Mike Stillman who was the creator of {\tt Macaulay} with Dave Bayer was also interested. He gave me more ideas. When David Eisenbud was visiting Cornell to give a talk I showed my code to him. He seemed to also like it. In the meantime, Bernd told Lisa and me that he could only take one more new student. To my relief, Lisa took herself out of the picture, and I became an official student of Bernd. So I started joining the group meetings consisting of Bernd, Jes\'us, and Rekha, as well as Paco Santos (who went on later to prove that toric Hilbert schemes could be disconnected and the Hirsch conjecture is false) and Anna Bigatti (of {\tt CoCoA} fame \cite{CoCoA}). Very exciting times indeed! The whole semester culminated in my first international conference and my first talk at the ``Hungarian-American Workshop on Combinatorics" in Budapest in May 1994. In the audience were people like Jon Lee (combinatorial optimization) and Satish Rao (computer science). The talk was received well and people who are not so familiar with this kind of work seemed to be intrigued. 

During the summer I kept working on the code, but it became clear that it reached its limit as it was. Some new idea was needed. In particular, it would have been much more desirable if one started without a generating set of $I_A$ and not do elimination, and yet compute a Gr\"obner basis. Bernd pointed out the following. 
\begin{proposition} \cite[Proposition 3 and Proposition 4]{HS95}  \label{prop:GRIN} Let $\{u^{(1)}, \, u^{(2)}, \ldots, \, u^{(n-d)} \}$ be a lattice basis for $\ker_\Z(A)$ and let $J = \langle 
x^{u^{(1)}_+} - x^{u^{(1)}_-}, \, \ldots, \,  x^{u^{(n-d)}_+} - x^{u^{(n-d)}_-} \rangle$. Then 
$$I_A \, = \, J \, : \, (x_1\cdots x_n)^\infty \, \, = ((\cdots (J \, : \, x_1^\infty) \, : \, x_2^\infty) \cdots) \, : \, x_n^\infty). $$
Moreover, each saturation on the right hand side of the formula can be computed by a single Gr\"obner computation with respect to a reverse lexicographic term order.  
\end{proposition}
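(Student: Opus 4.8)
The plan is to establish the statement in three essentially independent pieces: a reduction of the iterated saturation to a single saturation by the product $x_1 \cdots x_n$; the identity $I_A = J : (x_1\cdots x_n)^\infty$; and the fact that saturating a homogeneous ideal by one variable can be read off a reverse lexicographic Gr\"obner basis. First I would use the elementary identity $\mathfrak{a}:(fg)^\infty = (\mathfrak{a}:f^\infty):g^\infty$, valid for any ideal $\mathfrak{a}$ and ring elements $f,g$ (both sides equal $\{h : f^N g^N h \in \mathfrak{a} \text{ for some } N\}$); iterating it gives $J:(x_1\cdots x_n)^\infty = ((\cdots(J:x_1^\infty):x_2^\infty)\cdots):x_n^\infty$, so it remains to prove the two-term formula and the Gr\"obner-theoretic claim. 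Note also that, since in our setting the row span of $A$ contains $(1,\dots,1)$, every $u \in \ker_\Z(A)$ has coordinate sum zero, so the binomial generators of $J$ are homogeneous; hence $J$, $I_A$, and all the intermediate saturations are homogeneous ideals in the usual grading, which I will use in the third piece.

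For the identity, the inclusion $J \subseteq I_A$ is immediate since each $u^{(i)} \in \ker_\Z(A)$, so the generators of $J$ are among the binomial generators of $I_A$. Because $I_A$ is prime and $\phi_A(x_1\cdots x_n) = t^{a_1 + \cdots + a_n} \neq 0$ shows $x_1\cdots x_n \notin I_A$, we get $J:(x_1\cdots x_n)^\infty \subseteq I_A:(x_1\cdots x_n)^\infty = I_A$. For the reverse inclusion I would pass to the Laurent ring $R = \C[x_1^\pm,\dots,x_n^\pm]$ and let $J^e$ be the extension of $J$. In $R$ the identity $x^{u^{(i)}_+} - x^{u^{(i)}_-} = x^{u^{(i)}_-}(x^{u^{(i)}} - 1)$ together with invertibility of monomials shows $x^{u^{(i)}} - 1 \in J^e$; feeding this into the telescoping identities $x^{a+b} - 1 = x^a(x^b - 1) + (x^a - 1)$ and $x^{-b} - 1 = -x^{-b}(x^b - 1)$ and inducting on a lattice-basis expansion $w = \sum_i c_i u^{(i)}$ yields $x^w - 1 \in J^e$ for \emph{every} $w \in \ker_\Z(A)$, hence $x^{w_+} - x^{w_-} = x^{w_-}(x^w - 1) \in J^e$. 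Since $I_A$ is generated by the binomials $x^{w_+} - x^{w_-}$ with $w \in \ker_\Z(A)$, and since $J^e \cap \C[x_1,\dots,x_n] = J:(x_1\cdots x_n)^\infty$ (the standard description of the contraction of a localization at the multiplicative set generated by the $x_i$), this gives $I_A \subseteq J:(x_1\cdots x_n)^\infty$ and hence equality.

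For the third piece, fix $i$ and a reverse lexicographic term order $\succ$ in which $x_i$ is the smallest variable. The key lemma is that for a homogeneous $f$ one has $x_i \mid \mathrm{in}_\succ(f)$ if and only if $x_i \mid f$: if $x_i \nmid f$, choose a term $x^b$ of $f$ with $b_i = 0$; for any term $x^a$ of $f$ with $a_i \geq 1$ the vector $a - b$ has positive last coordinate (in the chosen variable ordering), so $x^b \succ x^a$, forcing $\mathrm{in}_\succ(f) = x^b$ with no $x_i$. Writing $x_i^{\nu(g)}$ for the largest power of $x_i$ dividing $g$, it then follows that if $G$ is a Gr\"obner basis of a homogeneous ideal $\mathfrak{a}$ with respect to $\succ$, then $\{\, g / x_i^{\nu(g)} : g \in G \,\}$ lies in $\mathfrak{a}:x_i^\infty$ and its initial terms generate $\mathrm{in}_\succ(\mathfrak{a}:x_i^\infty)$ — the latter by comparing $x_i$-exponents in the membership $x_i^N\,\mathrm{in}_\succ(f) \in \mathrm{in}_\succ(\mathfrak{a})$ for $x_i \nmid f \in \mathfrak{a}:x_i^\infty$ — so it is a Gr\"obner basis of $\mathfrak{a}:x_i^\infty$ with respect to $\succ$. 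Thus each saturation in the iterated formula is obtained from a single Gr\"obner basis computation (using, at the $k$-th step, a reverse lexicographic order with $x_k$ last) followed by dividing out the relevant variable from each basis element.

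The main obstacle is the reverse inclusion $I_A \subseteq J:(x_1\cdots x_n)^\infty$: the real content is that a lattice \emph{basis} of $\ker_\Z(A)$ — rather than a generating set of the full lattice of relations — already recovers, after saturation, every binomial in $I_A$, and the Laurent-ring telescoping argument is what makes that precise. The first reduction and the primality argument are routine, and the third piece, while it rests on the reverse-lexicographic divisibility lemma, is a standard feature of such orders.
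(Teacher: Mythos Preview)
The paper does not supply a proof of this proposition; it is simply stated with a citation to \cite{HS95} and then used to motivate the {\tt GRIN} algorithm. So there is nothing in the text to compare your argument against line by line.

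That said, your proof is correct and is exactly the standard argument (essentially what one finds in \cite{HS95} or in Chapter~12 of \cite{S96}). The three pieces are handled cleanly: the identity $\mathfrak a:(fg)^\infty=(\mathfrak a:f^\infty):g^\infty$ is the elementary reduction; the Laurent-ring telescoping argument is the right way to see that a lattice \emph{basis} suffices to recover $I_A$ after saturation; and the reverse-lexicographic divisibility lemma (for homogeneous $f$, $x_i\mid\mathrm{in}_\succ(f)$ iff $x_i\mid f$ when $x_i$ is revlex-smallest) is precisely the Bayer--Stillman fact underlying the single-Gr\"obner-basis computation of each $\mathfrak a:x_i^\infty$. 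Your use of the standing assumption that $(1,\ldots,1)$ lies in the row span of $A$ to ensure homogeneity of $J$ and of all the intermediate saturations is also correct and necessary for the third piece.
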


This proposition gives the desired algorithm and the only input needed is the matrix $A$. Computing a lattice basis of $\ker_\Z(A)$ is easy via the Hermite normal form algorithm \cite[Section 5.3]{Sch86}. One can even compute a {\it reduced} lattice basis in the sense of the LLL-algorithm \cite{L86}. Then $n$ applications of the geometric Buchberger's algorithm are needed, starting from the ideal $J$ and going through all the intermediate binomial ideals, ending in $I_A$. Surprisingly, this worked and it worked really fast. 

During 1994/95, Bernd was on sabbatical at NYU in New York City. I visited him there and he came back to Ithaca to check on his students. Encouraged by the success of the latest implementation we decided to make the code official. We called it {\tt GRIN} (GR\"obner bases for INteger programming). The paper we wrote (my first !) \cite{HS95} got accepted to the "Integer Programming and Combinatorial Optimization" (IPCO) conference in May 1995. We tried to make the paper really appealing to the optimization community. Our computational experiments showed that for as large (!) as $8 \times 16$ matrices $A$ we could solve integer programs $IP_{A,\omega}(b)$ in comparable times with the fastest integer programming solver at the time ({\tt CPLEX}). 
Of course, we knew this would not scale up to solve integer programs with thousands or millions of variables. But this was a proof of concept. I have to say I was a bit disappointed with the reaction to my talk at IPCO in Copenhagen. It was mixed. Some, such as Bob Bixby, the author of {\tt CPLEX}, saw it as a fruitful direction to pursue. Others were less impressed, mostly because they saw that the algorithm would not scale. 

This disappointment affected me and the progress of {\tt GRIN}. The plan had been to set up easy input and output formats, to work out the kinks in the algorithm, and to keep improving. I did help others do computations with {\tt GRIN} but I never brought it to a place which was available to everyone. Maintaining a software for users looked daunting. Luckily, others picked up from there. For instance, 
you can compute the toric ideal $I_A$ in {\tt Singular} \cite{Singular} just from $A$. If you issue the command {\tt toric\_ideal(A,"hs")} (the option stands for Hosten-Sturmfels) the algorithm based on Proposition \ref{prop:GRIN} is used. 

More importantly, {\tt 4ti2}  \cite{4ti2} came to the scene in 2001. Raymond Hemmecke, while doing his PhD in Duisburg, developed code for computing Graver bases solely based on computing Hilbert bases of rational cones. With the help of others, such as Ralf Hemmecke, Matthias K\"oppe, Peter Malkin, and Matthias Walter, {\tt 4ti2} became the go-to software for computing toric ideals, their Markov bases, Gr\"obner bases, Graver bases, and much more. The code helped significantly to produce results in stochastic integer programming \cite{HemS03, AH07, DHOW08}, lattice point counting \cite{DHTY04}, and probability theory and algebraic statistics \cite{HMSS08, BHLS10, HLS12}. {\tt Macaulay 2} \cite{M2}
has an interface with {\tt 4ti2} as well. Raymond was a postdoc at UC Davis in early naughts with Jes\'us as {\tt 4ti2} was developed and perfected. I believe Jes\'us deserves a lot of credit for carrying the torch, keeping in touch with the discrete optimization community, and advocating for algebraic methods in discrete optimization. His chapter in this volume is a wonderful resource as well as the book he co-authored with Raymond and Matthias \cite{DHK13}.

\section{Gr\"obner bases and convex polytopes} 

We arrived at the peak event of all of these developments, namely the ``Holiday Symposium" at New Mexico State University (NMSU), Las Cruces, that was held December 27-31, 1994. It is a bit unusual (crazy?) to have a conference between Christmas and New Year's Eve, but here was this NSF/NSA funded conference organized by Reinhard Laubenbacher who was at NMSU at the time. The whole event was scheduled around Bernd's ten one-hour talks which he delivered over five days. He wrote extensive lecture notes which included exercises. These lecture notes evolved into Bernd's blockbuster book {\it Gr\"obner bases and convex polytopes} \cite{S96} with the addition of four more chapters (chapters 3, 12, 13, and 14). They were an engaging summary of the state of the art in Gr\"obner fans and state polytopes of general polynomial ideals, Gr\"obner bases of toric ideals with applications in enumeration, sampling, and integer programming, universal and Graver bases of toric ideals, and regular triangulations. The last three talks were devoted to more recent developments: the second hypersimplex, $A$-graded algebras, and SAGBI bases. They were meant to stimulate new ideas and research. 

I got the lecture notes from Bernd probably late summer or early fall of 1994. He asked me to read the notes and give him my comments. In particular, he told me to ``do the exercises!" Here is the first exercise at the end of the first lecture which also appears as exercise (1) at the end of chapter 1 in \cite{S96}:

\vskip 0.2cm

Let $I$ be the ideal generated by the nine $2 \times 2$-minors of a $3 \times 3$-matrix of indeterminates. Find a universal Gr\"obner basis. How many distinct initial ideals $\mathrm{in}_\prec(I)$ are there?

\vskip 0.2cm

Even today I am surprised how anyone could have answered this question after reading only the notes of the first lecture (the result that a polynomial ideal can have only finitely many initial ideals, definition of universal Gr\"obner bases, a few examples, and the relation between weight vectors $\omega \in \R^n$ and initial ideals induced by them). If one had known the contents of the lectures on toric ideals and their universal Gr\"obner bases, one could have developed some ideas. Knowing the contents of the lecture on regular triangulations would have also helped. Of course, all of these needed to be done without the help of the (non-existing) software such as {\tt Gfan}. Luckily, I knew some of these things (had I really mastered them?) and I could get a start. 

The ideal $I$ is the toric ideal that is the defining ideal of $\PP^2 \times \PP^2$. The corresponding matrix $A$ is a $6 \times 9$ matrix
whose columns are $e_i \oplus e_j$, $1 \leq i,j \leq 3$, 
where $e_i$ form the standard unit basis of $\R^3$. This matrix is unimodular and therefore $\mathcal{U}_A = Gr_A$, and the latter can be computed with Theorem \ref{thm:graver}. Or you know that $\mathcal{U}_A$ is made up of the circuits of $A$ and they are in bijection with the cycles in the complete bipartite graph $K_{3,3}$. So there are $15$ squarefree binomials in $\mathcal{U}_A$, $9$ quadratics and $6$ cubics: $x^{u_1} - x^{v_1}, \, \ldots, \, x^{u_{15}} - x^{v_{15}}$. In the last step one needs to check whether there is a vector $\omega \in \R^9$ such that
$$ \omega \cdot u_1 > \omega \cdot v_1, \,\, \omega \cdot u_2 > \omega \cdot v_2, \, \cdots \, , \omega \cdot u_{15} > \omega \cdot v_{15}.$$
There are $2^{15}$ such linear inequality systems to check. Clearly, one needed to write some code using {\tt MAPLE}. When I went through all of these and got the answer ($108$; go ahead and try yourself) I was very happy. This was hard, but it was not completely unmanagable. One needs to be willing to extend and definitely do computations. I think this is how Bernd operates, in a nutshell. 

The Holiday Symposium was a huge success. There were some new faces in Las Cruces who have been contributing significantly to commutative algebra and computational algebraic geometry since then, such as Irena Peeva, Maurice Rojas, and Frank Sottile. Bernd's lectures went extremely well although he had caught some bug and therefore was sick. In characteristic fashion, Bernd announced exercise sessions and invited (!) \! all grad students and postdocs. One embarrassing anecdote for me happened during the first exercise session where we discussed the above exercise. At some point we landed at a place where we had to argue that circuits of a unimodular matrix have only $0, +1, -1$ entries. Bernd called me to the board. For some reason I froze. I could not reason for the life of me. He ``rescued" me by basically completing the answer. On the same evening he invited me to his hotel room and we chatted about various things. He did make a point to tell me that I should have been more prepared. But the comment was very neutral. It was not angry or shaming or condescending.  Also in a fine gesture, he wrote in the Introduction to {\it Gr\"obner bases and convex polytopes} that I ``did a particularly great job during the last round of proof-reading". The book is to this day the starting point for someone who is learning about toric ideals, Gr\"obner fans, state polytopes etc. As an interesting coincidence, Bernd taught a course in Berkeley on Gr\"obner bases and convex polytopes from the book in spring 2022 as he turned 60. Many students enrolled which shows that the book and the subject are still alive. 

\section{Degrees of Gr\"obner bases of toric ideals}

In late spring 1995 I took my A exam at Cornell. This is an oral PhD qualification exam where I had to also pitch to my committee (Bernd, Lou Billera, Dexter Kozen, Les Trotter) my plans for my thesis. By that time, Bernd and I had settled on the question of giving a better bound on the degrees of Gr\"obner or Graver basis elements of toric ideals. 

The starting point was Theorem \ref{thm:degree}. Motivated by the Eisenbud-Goto conjecture \cite{EG84} that was still a conjecture at the time and by computational observations, we thought that the bound in this theorem should be $(d+1)D(A)$. The proof of the theorem reveals that one could first try to show that the bound is given by the maximum degree  $\mathrm{cdeg}(A)$ of any circuit of $A$. I worked on this for a while, and together with Rekha we produced a counterexample. This counterexample is reported in \cite{S97} but it was already known by the time Bernd gave his lecture at the ``AMS Summer Institute in Algebraic Geometry at Santa Cruz" in July 1995 on which the publication was based. Here I  reproduce the counterexample as Bernd presented it.  
\begin{example} \label{ex:degree}
Let $B = \begin{bmatrix}  1 & 3 & 4 & 6 & 0 \\ 0 & 0 & 0 & -5 & 1 \end{bmatrix}$, and let $A = \Lambda(B)$. The ideal $I_A$ equals
$$
I_A  \, = \, \langle \,
 x_2 y_1^3 - x_1^3 y_2 , \,
x_3 y_1^4 - x_1^4 y_3 , \,
x_3^3 y_2^4 - x_2^4 y_3^3 ,\,
x_4 x_5^5 y_2^2 - x_2^2 y_4 y_5^5 ,\,
 x_4 x_5^5 y_1^6 - x_1^6 y_4 y_5^5,$$
$$
\underline{x_4^2 x_5^{10} y_3^3 - x_3^3 y_4^2 y_5^{10}  },\,\,
x_4 x_5^5 y_1^2 y_3 - x_1^2 x_3 y_4 y_5^5,\,
x_4 x_5^5 y_1^3 y_2 - x_1^3 x_2 y_4 y_5^5,\,
x_3 y_1 y_2 - x_1 x_2 y_3,
$$
$$
  x_1 x_4 x_5^5 y_2 y_3 \! - \! x_2 x_3 y_1 y_4 y_5^5,
  x_2^2 x_4 x_5^5 y_3^3 - x_3^3 y_2^2 y_4 y_5^5 ,
  x_1 x_3^2 y_2^3 - x_2^3 y_1 y_3^2 ,
  x_1^2 x_3 y_2^2 - x_2^2 y_1^2 y_3, $$
  $$
   x_2 x_4 x_5^5 y_1 y_3^2 - x_1 x_3^2 y_2 y_4 y_5^5 ,\,
  x_1^2 x_4 x_5^5 y_3^2 - x_3^2 y_1^2 y_4 y_5^5 ,\,\,
  x_4^2 x_5^{10} y_1 y_2 y_3^2 - x_1 x_2 x_3^2 y_4^2 y_5^{10} \,
\rangle. 
$$
The generators of $I_A$ form a Graver basis of $I_A$. The first six binomials including the underlined one correspond to the circuits of $A$. The underlined circuit attains $\mathrm{cdeg}(A) = 15$. However, the last binomial has degree $16 > 15$.
\end{example}
Although this example is nice it is not satisfactory since when we attempt to compute the underlined circuit using (\ref{eq:circuit}) we see that the constant $c=2$. In other words, the ``true" degree of this circuit is not $15$ but $30$. 
Therefore we define the true degree of a circuit to be the degree computed without dividing by the common factor $c$ in (\ref{eq:circuit}). With this we can refine the conjecture. 
\begin{conjecture} \cite[Conjecture 4.8]{S97} \label{conj:true-degree}
The degree of any Graver basis element of a toric ideal $I_A$ is at most the maximum true degree of any circuit of $A$. 
\end{conjecture}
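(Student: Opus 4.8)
The plan is to re-run the circuit-decomposition argument behind Theorem~\ref{thm:degree}, carried one step further so that the degree of a Graver element is controlled by a \emph{single} circuit rather than by a Carath\'eodory-length sum of circuits. Let $x^{w_+}-x^{w_-}$ be a Graver basis element, with $w=w_+-w_-\in\ker_\Z(A)$; then $w$ lies in the Hilbert basis $\mathcal H_\rho$ of $H_\rho$ for the sign pattern $\rho$ of $w$, so $w$ cannot be written as a sum of two nonzero elements of the monoid $H_\rho\cap\Z^n$. Since $H_\rho$ is a pointed rational cone generated by the circuits of $A$ it contains, Carath\'eodory's theorem gives $w=\lambda_1u_1+\cdots+\lambda_ku_k$ with $k\le n-d$, the $u_i$ linearly independent circuits in $H_\rho$, and all $\lambda_i>0$. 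If some $\lambda_i\ge1$ then, just as in the proof of Theorem~\ref{thm:degree}, one checks $w-u_i\in H_\rho\cap\Z^n$ (for each coordinate $j$, $\rho_jw_j=\sum_l\lambda_l\rho_j(u_l)_j\ge\lambda_i\rho_j(u_i)_j\ge\rho_j(u_i)_j\ge0$), so $w=(w-u_i)+u_i$ contradicts the irreducibility of $w$ in $\mathcal H_\rho$ --- unless $w=u_i$ is itself a circuit, in which case there is nothing to prove, since the divisor $c$ in \eqref{eq:circuit} satisfies $c\ge1$ and hence $\deg(w)$ is at most its own true degree. Thus $0<\lambda_i<1$ for every $i$, and since the $u_i$ all share the sign pattern $\rho$ no cancellation occurs in the sum, so
$$\deg(x^{w_+}-x^{w_-})=\tfrac12\|w\|_1=\sum_{i=1}^{k}\lambda_i\cdot\tfrac12\|u_i\|_1=\sum_{i=1}^{k}\lambda_i\deg(u_i).$$
Conjecture~\ref{conj:true-degree} is thereby reduced to the following: for the Carath\'eodory data above --- $w$ in the relative interior of the half-open parallelepiped on linearly independent circuits $u_1,\dots,u_k\subset H_\rho$ --- one has $\sum_{i=1}^k\lambda_i\deg(u_i)\le\max\{\text{true degree of a circuit of }A\}$.

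This inequality is the heart of the matter and the step I expect to be the main obstacle. The natural attack is an exchange/induction argument on $k$: use Cramer's rule to write each $\lambda_i$ as a ratio of determinants built from the columns $a_j$ with $j\in S:=\bigcup_i\operatorname{supp}(u_i)=\operatorname{supp}(w)$, and then repeatedly replace a pair $u_i,u_j$ by a single circuit supported inside a $(d+1)$-subset of $\operatorname{supp}(u_i)\cup\operatorname{supp}(u_j)$, tracking how the $\gcd$ factors $c$ of \eqref{eq:circuit} combine. The true degree of a circuit whose support lies in $\sigma$ with $|\sigma|=d+1$ equals $\tfrac12\sum_{j\in\sigma}\bigl|\det(a_i:i\in\sigma\setminus j)\bigr|$, which is exactly the value produced by a Laplace expansion of the corresponding Cramer identity \emph{before} common factors are cleared; the hope is that these expansions telescope so that the whole sum $\sum_i\lambda_i\deg(u_i)$ is absorbed into one of them. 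The delicate points are (i) handling the case $|S|>d+1$, where no single $(d+1)$-set contains all the relevant support and one must genuinely swap circuits, and (ii) showing that the unreduced determinants can only grow under such swaps --- and it is precisely here that replacing $\mathrm{cdeg}(A)$ by the \emph{true} circuit degree is indispensable, since Example~\ref{ex:degree} already shows the bound is false with $\mathrm{cdeg}(A)$ in place of the maximum true circuit degree.

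Finally I would record the routine checks and, should the general collapse resist, the special cases worth settling first. The statement is immediate when $\ker_\Z(A)$ has rank $1$ and when $w$ itself is a circuit; it is consistent with Theorem~\ref{thm:degree}, since the true degree of any circuit is at most $(d+1)D(A)$, so the conjecture predicts a far stronger bound than $(n-d)(d+1)D(A)$; and it holds in Example~\ref{ex:degree}, where the degree-$16$ Graver element is dominated by the true degree $30$ of the underlined circuit. Good first targets are the unimodular case, where every circuit has $c=1$, the parallelepiped is rigid, and the Graver basis in fact reduces to the circuits; small corank $n-d$, where the Gr\"obner fan $\mathcal G_A$ is tractable enough for a direct analysis; and the Lawrence family $A=\Lambda(B)$ out of which Example~\ref{ex:degree} arises, where Markov, universal Gr\"obner, and Graver bases all coincide and the combinatorics is most transparent.
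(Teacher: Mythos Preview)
The statement you are trying to prove is a \emph{conjecture}, and the paper does not prove it; on the contrary, immediately after stating it the paper reports that Tatakis and Thoma disproved it in 2011. So any proof strategy must fail, and the question is only where yours breaks. Your reduction is clean and correct: a Graver element $w$ that is not itself a circuit admits a Carath\'eodory representation $w=\sum_{i=1}^k\lambda_iu_i$ inside $H_\rho$ with all $0<\lambda_i<1$, and since the $u_i$ share the sign pattern $\rho$ there is no cancellation and $\deg(w)=\sum_i\lambda_i\deg(u_i)$. The inequality you then isolate as ``the heart of the matter,''
\[
\sum_{i=1}^k\lambda_i\deg(u_i)\ \le\ \max\{\text{true degree of a circuit of }A\},
\]
is exactly the step that is false. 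The exchange/telescoping scheme you sketch---replacing pairs of circuits by a single circuit on a $(d{+}1)$-subset and hoping the unreduced determinants in \eqref{eq:circuit} only grow---cannot be completed, because the conclusion it targets is wrong.

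The paper's counterexample is concrete: let $G$ be the graph consisting of an $s$-cycle with an odd $\ell$-cycle attached at each of its vertices, and take $A=A(G)$ the vertex--edge incidence matrix. The closed even walk traversing every edge once yields a Graver element of degree $s(\ell+1)$, while the maximum true circuit degree is only $2(\ell+s-1)$; for $s,\ell\ge 3$ one has $s(\ell+1)>2(\ell+s-1)$. The paper goes further and records a family $G_r$ where the maximum true circuit degree grows linearly in $r$ but the maximum Graver degree grows exponentially, so no constant-factor repair of the bound survives either. What \emph{does} go through, and what your parallelepiped picture is well suited to, is the special case the paper records just before the counterexample: if $H_\rho$ is simplicial then the bound holds for Graver elements in $H_\rho$. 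That is the correct scope of the argument you have outlined.
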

A small progress due to Bernd was reported in my dissertation \cite{H97}. Recall that $H_\rho = \R^n_\rho \cap \ker_\Z(A)$. This rational polyhedral cone is generated by the circuits of $A$ conforming to the sign pattern $\rho$.
\begin{proposition}
If $H_\rho$ is a simplicial cone then the degree of any Graver basis element of $I_A$ contained in $H_\rho$ is bounded above by the 
maximum  true degree of any circuit in $H_\rho$. 
\end{proposition}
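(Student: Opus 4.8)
The plan is to treat a Graver basis element of $I_A$ contained in $H_\rho$ as what it is --- an element $v$ of the Hilbert basis $\mathcal{H}_\rho$, i.e.\ an irreducible element of the monoid $H_\rho\cap\ker_\Z(A)$ --- and to use the simplicial hypothesis to pin down its conical decomposition. When $H_\rho$ is simplicial its $n-d$ extreme rays are spanned by the circuits $u_1,\dots,u_{n-d}$ of $A$ conforming to $\rho$, these are linearly independent, and every $v\in H_\rho$ has a unique representation $v=\lambda_1u_1+\dots+\lambda_{n-d}u_{n-d}$ with $\lambda_i\ge 0$. A useful preliminary observation is that one may then choose a basis $B$ of the column matroid of $A$, with complement $F=\{f_1,\dots,f_{n-d}\}$, so that each $u_i$ is the fundamental circuit of $f_i$ with respect to $B$; in particular $u_{i,f_j}=0$ for $j\neq i$, and by \eqref{eq:circuit} the true circuit $\tilde u_i:=c_iu_i$ (with $c_i$ the constant from \eqref{eq:circuit} for $u_i$, so that $\mathrm{truedeg}(u_i)=c_i\deg(u_i)$) has $\lvert(\tilde u_i)_{f_i}\rvert=\lvert\det A_B\rvert=:\Delta$, the \emph{same} value for all $i$. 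Write $T$ for the maximum true degree of a circuit in $H_\rho$.

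First I would rerun the argument from the proof of Theorem~\ref{thm:degree}: if $\lambda_j\ge 1$ for some $j$, then $u_j$ and $v-u_j=(\lambda_j-1)u_j+\sum_{i\neq j}\lambda_iu_i$ both lie in $H_\rho\cap\ker_\Z(A)$, and unless $v=u_j$ this writes $v$ as a sum of two nonzero monoid elements, contradicting irreducibility; and if $v=u_j$ then its degree is $\mathrm{truedeg}(u_j)/c_j\le T$ and we are done. So assume $0\le\lambda_i<1$ for all $i$. Since all $u_i$ conform to $\rho$, degree is additive along $H_\rho$, so the degree of $x^{v_+}-x^{v_-}$ equals $\sum_i\lambda_i\deg(u_i)=\sum_i(\lambda_i/c_i)\,\mathrm{truedeg}(u_i)\le\bigl(\sum_i\lambda_i/c_i\bigr)T$. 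Reading off the $f_i$-coordinate of $v$ gives $\lvert v_{f_i}\rvert=\lambda_i\Delta/c_i$, hence $\sum_i\lambda_i/c_i=\bigl(\sum_i\lvert v_{f_i}\rvert\bigr)/\Delta$. So the whole proposition reduces to the single inequality $\sum_i\lvert v_{f_i}\rvert\le\Delta$ for every irreducible $v\in H_\rho\cap\ker_\Z(A)$.

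That inequality is the crux, and the step I expect to wrestle with. I would argue by contradiction: assuming $\sum_i\lvert v_{f_i}\rvert>\Delta$, I want to produce $w=\sum_i\mu_iu_i\in\ker_\Z(A)$ with $0\le\mu_i\le\lambda_i$ for all $i$ and $w\notin\{0,v\}$, for then $v=w+(v-w)$ contradicts irreducibility. What makes the coordinates in $B$ cost nothing here is that each $(u_i)_B$ conforms to $\rho_B$ (because $u_i$ conforms to $\rho$), so for any nonnegative coefficients $\mu_i$ both $w$ and $v-w$ automatically conform to $\rho$ on \emph{all} coordinates; thus only the integrality $w\in\ker_\Z(A)$ must be arranged, which is a congruence condition on the $F$-coordinates of $w$ modulo $\Delta$ --- membership in the finite-index overlattice $L_F:=\{y\in\Z^F: A_B^{-1}A_F y\in\Z^B\}$ of $\bigoplus_i(\Delta/c_i)\Z e_{f_i}$. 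So the task becomes: show that the lattice box between $0$ and $v_F:=(v_f)_{f\in F}$ contains a point of $L_F$ other than $0$ and $v_F$ as soon as $\sum_i\lvert v_{f_i}\rvert>\Delta$. The delicate point is that $L_F$ can be an awkward overlattice and a crude count on the size of the box does not suffice; one has to use that $v_F$ lies in a single orthant --- a consequence of all $u_i$ sharing the sign pattern $\rho$ --- together with the uniform modulus $\Delta$ coming from the fundamental-circuit description. Indeed, the analogous inequality fails for a general simplicial cone (for example $\R^3_{\ge 0}$ with the lattice $\Z^3+\Z\cdot\tfrac12(1,1,1)$ has the irreducible point $\tfrac12(1,1,1)$, for which the coefficient sum is $\tfrac32$), which is exactly why the hypothesis must be that $H_\rho$ arises as $\R^n_\rho\cap\ker_\Z(A)$.
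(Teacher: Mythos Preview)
The paper does not actually prove this proposition; it is stated without proof and attributed to Sturmfels via the author's dissertation \cite{H97}. So there is nothing in the paper to compare against, and I can only assess your argument on its own terms.

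Your reduction is sound as far as it goes. The claim that the extreme circuits $u_1,\dots,u_{n-d}$ of a simplicial $H_\rho$ are the fundamental circuits of a common basis $B$ is correct but deserves one line of justification: the $n-d$ facets of the simplicial cone are cut out by $n-d$ coordinate hyperplanes $\{x_{f_j}=0\}$, the ray $u_i$ lies on every facet except the $i$th, hence $(u_i)_{f_j}=0$ for $j\neq i$; then the projection $\ker(A)\to\R^F$ is an isomorphism, which forces the columns of $A$ indexed by $B=[n]\setminus F$ to be independent. With that in place your degree computation and the reduction to the inequality $\sum_i\lvert v_{f_i}\rvert\le\Delta$ are correct.

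The genuine gap is exactly the one you flag. You reduce everything to that single inequality, propose to derive it by locating a nontrivial point of $L_F$ in the box between $0$ and $v_F$, and then exhibit your own counterexample (the body-centered cubic lattice in $\R^3_{\ge 0}$) showing that no argument using only the projected lattice $L_F$ and the simplicial cone can succeed. That is an honest diagnosis, but it means the proof is unfinished: you have isolated the place where the hypothesis that $H_\rho$ arises as $\R^n_\rho\cap\ker_\Z(A)$ must do real work, and you have not supplied that work. Irreducibility of $v$ is tested in the ambient $\Z^n$, not merely in $L_F$, and the $B$-coordinates $v_B=-A_B^{-1}A_Fv_F$ carry constraints you have not yet exploited. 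As it stands, this is a well-organized outline that stops short of the decisive step.
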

Progress on Conjecture \ref{conj:true-degree} stayed dormant for more than a decade. A result of Sonja Petrovi\'c showed that the conjecture holds for rational normal scrolls \cite{P08}. Then in 2011 Christos Tatakis and Apostolos Thoma provided a counterexample.
\begin{theorem} \cite{TT11}
There exists a toric ideal $I_{A(G)}$
associated to a graph $G$ for which 
Conjecture \ref{conj:true-degree} does not hold. 
\end{theorem}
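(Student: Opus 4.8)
The plan is to translate the conjecture into graph theory via Villarreal's edge ring and then engineer a graph whose long primitive walks outrun all of its circuits. For a finite connected graph $G$ with vertex set $V$ and edge set $E$, let $A(G)$ be the vertex--edge incidence matrix, so that $I_{A(G)}$ is the toric ideal whose binomials $B_w = \prod_j x_{e_{i_{2j-1}}} - \prod_j x_{e_{i_{2j}}}$ are indexed by the even closed walks $w = (e_{i_1}, \ldots, e_{i_{2\ell}})$ of $G$, with $\deg B_w = \ell$. (Note $(1,\ldots,1)$ is in the $\Q$-row span of $A(G)$, so we are in the homogeneous setting of Remark \ref{rmk:A-matrix}.) First I would assemble the standard dictionary: the Graver basis of $I_{A(G)}$ consists of the $B_w$ with $w$ a \emph{primitive} even closed walk (those admit an explicit combinatorial description), while the circuits of $A(G)$ are exactly the $B_w$ for the three minimal configurations --- an even cycle, two odd cycles sharing exactly one vertex, and two vertex-disjoint odd cycles joined by a single path.

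The second step is to pin down the maximum true degree of a circuit using formula (\ref{eq:circuit}). For an even cycle of length $2k$ the circuit vector is $\pm(1,-1,\ldots,1,-1)$, every $d \times d$ minor of $A(G)$ occurring in (\ref{eq:circuit}) is a reduced tree incidence determinant $\pm 1$, so $c = 1$ and the true degree is $k$. For the two ``odd cycle'' configurations the walk runs twice along the connecting path of length $p \ge 0$, the circuit vector carries entry $\pm 2$ there, and the minors in (\ref{eq:circuit}) are incidence determinants of non-bipartite unicyclic (sub)graphs or of disjoint unions of two such, hence all even with gcd exactly $c = 2$; consequently the true degree doubles to $2(k_1 + k_2 + p + 1)$, where $2k_1 + 1$ and $2k_2 + 1$ are the lengths of the two odd cycles. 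Maximising over all circuits of $G$ yields an explicit bound $T(G)$ for the right-hand side of Conjecture \ref{conj:true-degree}.

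For the third step I would, following the construction of Tatakis and Thoma \cite{TT11}, build a small graph $G$ containing several odd cycles together with a primitive even closed walk $w$ forced to traverse enough of them (with the right edges used twice) that $\deg B_w > T(G)$, while the classification above guarantees that every circuit still only ``sees'' two odd cycles. One then checks two things for the explicit $w$: the inequality $\deg B_w > T(G)$, a direct edge count; and the primitivity of $w$, i.e.\ that there is no $B_{w'} \in I_{A(G)}$ with $x^{w'_+} \mid x^{w_+}$ and $x^{w'_-} \mid x^{w_-}$. In practice the latter can be confirmed mechanically by computing $Gr_{A(G)}$ from the Lawrence lifting $\Lambda(A(G))$ as in Theorem \ref{thm:graver} (e.g.\ with {\tt 4ti2}), but for a self-contained proof one argues that any decomposition of $w$ into two shorter even closed subwalks would strand a vertex of odd degree in one of the pieces.

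I expect the main obstacle to be the interplay between Steps 2 and 3: getting the constant $c$ in (\ref{eq:circuit}) exactly right for incidence matrices --- the phenomenon that mixed odd-cycle circuits have true degree \emph{twice} their naive degree is precisely what made Conjecture \ref{conj:true-degree} look reasonable, and it forces the counterexample graph to be designed so that no pair of its odd cycles is joined cheaply --- and then certifying (not merely computing) that the distinguished long walk $w$ is genuinely a Graver element rather than a concatenation of smaller ones. The remaining ingredients, the edge-ring dictionary and the degree bookkeeping, are routine.
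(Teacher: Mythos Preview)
Your proposal is correct and follows essentially the same route as the paper: pass to the toric ideal of a graph via the vertex--edge incidence matrix, invoke the known combinatorial descriptions of its circuits \cite{V95} and Graver elements \cite{RTT12}, and exploit the fact that every circuit sees at most two odd cycles while a primitive walk can be forced through many. The paper simply makes your Step~3 explicit---it takes $G$ to be an $s$-cycle with an odd $\ell$-cycle attached at each vertex, names the walk through all edges as the Graver element, reads off the maximal true circuit degree $2(\ell+s-1)$ from two pendant odd cycles joined by the long arc of the central cycle, and checks $s(\ell+1)>2(\ell+s-1)$---whereas you defer the concrete graph to \cite{TT11} and spend more care on the constant $c$ in~(\ref{eq:circuit}), which the paper's proof does not justify.
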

\begin{proof}
Given an undirected graph $G$ on $d$ vertices and $n$ edges let $A(G)$ be the vertex-edge incidence matrix of $G$. The binomials in $I_{A(G)}$ corresponding to the circuits of $A(G)$ have been characterized in \cite{V95}. Similarly, Graver basis elements of $I_{A(G)}$ have been characterized in \cite{RTT12}. Now, let $G$ be a graph consisting of a cycle of length $s$ and $s$ odd cycles of length $\ell$, each attached to a vertex of the first cycle; see Figure 5 in \cite{TT11}. The even walk of length $\ell s + s$ traversing all edges of $G$ corresponds to a Graver basis element of degree $\ell s + s$. Longest cycles in this graph consist of two cycles of length $\ell$ joined by a path of length $s-1$. The corresponding circuits are the ones with the maximum true degree 
of $2(\ell + s -1)$. Now it is easy 
to see that $s(\ell + 1) > 2(\ell + s -1)$ since $s, \ell > 2$. 
\end{proof}
It turns out one can actually create really bad counterexamples. 
\begin{theorem} \cite[Theorem 4.5]{TT15} There exist a family of graphs $G_r$ where the maximum true degree of circuits in $I_{A(G_r)}$ grows linearly in $r$ whereas the maximum degree of Graver basis elements of $I_{A(G_r)}$ grows exponentially in $r$.   
\end{theorem}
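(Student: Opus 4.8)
The plan is to iterate the Tatakis--Thoma graph of the previous theorem, arranging that the configuration grows ``outward'' by a constant factor at each stage while its ``diameter'' grows only by a constant. I use throughout the two combinatorial dictionaries recalled in the previous proof: by \cite{V95} the circuits of $A(G)$ are the even cycles of $G$ together with the subgraphs formed by two vertex-disjoint odd cycles joined by a path (possibly degenerating to a single common vertex), while by \cite{RTT12} the Graver basis elements of $I_{A(G)}$ are the primitive even closed walks of $G$; in both cases the relevant degree is read off the underlying walk, an edge lying on a cycle contributing $1$ and an edge of a connecting path contributing $2$.

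First I would fix an even integer $2k \ge 4$ and an odd integer $\ell \ge 3$, let $G_0$ be a single odd cycle $C_\ell$ with one vertex designated its \emph{root}, and, having built $G_r$, form $G_{r+1}$ from a fresh even cycle $C_{2k}$ by gluing at each of its $2k$ vertices a separate copy of $G_r$, identifying the root of that copy with the vertex; any vertex of the central $C_{2k}$ serves as the root of $G_{r+1}$. Then every vertex of $G_r$ has even degree (twice the number of blocks through it), so $G_r$ is connected and Eulerian, and it is a cactus whose biconnected components are exactly the copies of $C_{2k}$ and $C_\ell$ used in the construction, each of size at most $\max(2k,\ell)$, glued in a tree-like pattern whose block--cut tree has depth $\Theta(r)$. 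Finally $e_r := |E(G_r)|$ satisfies $e_{r+1} = 2k + 2k\,e_r$, so $e_r = \Theta((2k)^r)$ and $e_r$ is even for $r \ge 1$.

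Next I would bound the true degrees of circuits. Any simple cycle of $G_r$ lies inside a single block, since it is $2$-connected and cannot pass through a cut vertex twice; hence every cycle of $G_r$ has at most $\max(2k,\ell)$ edges, and in particular every even cycle has true degree at most $2k$. A circuit of the second kind is determined by two odd-cycle blocks $C_\ell$ together with a path in $G_r$ joining them; any such path threads through $\Theta(r)$ blocks and cut vertices, so it has length $\Theta(r)$ and the circuit has true degree $2\ell + 2\cdot\Theta(r) = \Theta(r)$. Thus the maximum true degree of a circuit of $I_{A(G_r)}$ grows linearly in $r$.

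Finally, since $G_r$ is connected and Eulerian it admits a closed Euler walk $W_r$ using each edge once; as $e_r$ is even for $r \ge 1$, the walk $W_r$ has even length and yields a binomial $B_r \in I_{A(G_r)}$ of degree $e_r/2 = \Theta((2k)^r)$. It remains to show that $W_r$ (or a suitable variant) is a \emph{primitive} even closed walk, so that $B_r$ lies in the Graver basis; together with the previous step this establishes the exponential growth and proves the theorem. \emph{This last verification is the crux.} One must exclude a conformal splitting of the lattice vector of $W_r$, equivalently a proper even closed subwalk whose edge vector is dominated by that of $W_r$. The leverage is the block--cut tree: such a subwalk would be supported on a union of blocks, and because each pendant block is an \emph{odd} cycle attached at a single cut vertex, it cannot be traversed by a closed walk in isolation without also consuming edges of the adjacent blocks; propagating this support-and-parity bookkeeping through the tree should force the only even closed subwalk dominated by $W_r$ to be $W_r$ itself. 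Carrying this out carefully — and, should the bare Euler walk fail to be primitive, replacing it by a deliberately routed closed walk that traverses each edge once or twice, is primitive, and still has length $\Theta((2k)^r)$ — is where the real work lies.
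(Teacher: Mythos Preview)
The paper does not prove this theorem; it merely quotes the result from \cite{TT15} and moves on, so there is no argument here to compare your construction against. That said, your sketch is not a proof, and you say so yourself: the primitivity of the Euler walk is ``the crux'' and ``where the real work lies,'' yet you do not carry it out. As written, the proposal establishes only that $G_r$ is Eulerian with exponentially many edges and that its circuits have linearly bounded true degree; the statement about Graver degrees is left open.

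There is also a concrete reason to doubt that the bare Euler walk on your $G_r$ is primitive. Your graph is full of \emph{even} cycles, namely every central $C_{2k}$ introduced at each stage. Each such $C_{2k}$ is already a circuit of $A(G_r)$, hence a Graver element, and its lattice vector has $\pm 1$ entries on the $2k$ edges of that block and zeros elsewhere. The Euler walk also has $\pm 1$ entries on those same edges. If the signs happen to agree around the cycle, then the circuit vector is a conformal summand of the Euler vector and primitivity fails outright; and since an Euler tour must enter and leave the block through its unique cut vertex, the local alternation of signs around $C_{2k}$ is essentially forced. In other words, your parity argument that ``a pendant odd block cannot be traversed in isolation'' only handles the leaves; the internal even blocks are precisely where a conformal splitting can occur. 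The Tatakis--Thoma graphs avoid this by building the recursion out of \emph{odd} cycles throughout, so that no block is itself an even cycle and the obstruction to sub-walks is genuinely a parity obstruction at every node of the block tree. Your fallback of ``replacing it by a deliberately routed closed walk that traverses each edge once or twice'' may be salvageable, but as it stands the central step is missing and the most natural candidate walk is almost certainly not primitive.
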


\section{Initial ideals of toric ideals}
I wish to conclude this brief history of toric ideals in their first decade with one last result with which I was involved. The starting point was one more time integer programming problems $IP_{A,\omega}(b)$. In order to solve these problems, one usually passes to the {\it linear programming relaxation} where the constraint that the solution vectors are $x \in \N^n$ is replaced with the easier constraint $x \geq 0$. However, in the early days of integer programming Ralph~E.~Gomory introduced a different and more algebraic relaxation: let $\sigma$ be a subset of $\{1, \ldots, n\}$. Now replace the constraint that the solution vectors are $x \in \N^n$ with the constraints that $x_i \in \N$ for all $i \in \sigma$ and $x_i \in \Z$ for all $i \not \in \sigma$. This is known as a {\it group relaxation} of the integer programs $IP_{A,\omega}(b)$ \cite{G65}. 

Bernd had early done some observations relating group relaxations to the structure of the
monomial initial ideal $\mathrm{in}_\succ(I_A)$ of the toric ideal $I_A$ with respect to the term order $\succ$ induced by the vector $\omega$ \cite[Section 12.D]{S96}. His observations were pointing in the direction of studying minimal and embedded primes and primary decomposition of $\mathrm{in}_\succ(I_A)$. 

One more time, Rekha and I joined forces and dug into this. Two of our papers explored the connection of these initial ideals and their {\it standard pair decompositions} to group relaxations \cite{HT99a, HT03}. However, a third one is my favorite since it proved a surprising result \cite{HT99b}. 

By the time I got my PhD, there was only one general structural result known about $\mathrm{in}_\succ(I_A)$.
\begin{theorem} \cite[Corollary 8.9]{S96} The initial ideal $\mathrm{in}_\succ(I_A)$ is squarefree, i.e. it is a radical ideal, if and only if the corresponding regular triangulation of $A$ is unimodular.    
\end{theorem}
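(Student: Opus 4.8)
\emph{The plan.} Write $J := \mathrm{in}_\succ(I_A)$ and grade $\C[x_1,\dots,x_n]$ by $\deg(x_i) = a_i \in \Z^d$. The theorem above identifying $\sqrt{\mathrm{in}_\succ(I_A)}$ with $I_{\Delta_\omega}$ gives $\sqrt{J} = I_{\Delta_\omega}$, and a monomial ideal equals its radical precisely when it is generated by squarefree monomials; so the assertion is equivalent to $J = I_{\Delta_\omega}$. Since $J \subseteq \sqrt{J} = I_{\Delta_\omega}$ always, I would establish this equality by comparing $A$--graded Hilbert functions. The standard monomials of $J$ form a $\C$--basis of $\C[x]/J$ and, by Macaulay's theorem, also of $\C[x]/I_A$; as this basis consists of monomials, the two quotients have the same $A$--graded Hilbert function, which by $\C[x]/I_A \cong \C[\N(A)]$ equals $b \mapsto \mathbbm{1}_{\N(A)}(b)$. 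On the other hand $\C[x]/I_{\Delta_\omega}$ has, in degree $b$, dimension equal to $\#\{u \in \N^n : Au = b,\ \mathrm{supp}(u) \in \Delta_\omega\}$. Hence $J$ is squarefree if and only if this count equals $\mathbbm{1}_{\N(A)}(b)$ for every $b \in \Z^d$.

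\emph{Reducing to a lattice condition.} The first step is to show that this count is always at most one: if $u \in \N^n$ has $\mathrm{supp}(u) =: \tau \in \Delta_\omega$, then $b = Au$ lies in the relative interior of the face $\mathrm{cone}(a_i : i \in \tau)$ of $\Delta_\omega$, so $\tau$ is determined by $b$ (the relative interiors of the faces of a triangulation are pairwise disjoint), and then the linear independence of $\{a_i : i \in \tau\}$ determines $u$. Since such a $u$ forces $b \in \N(A)$, the Hilbert function comparison collapses to: $J$ is squarefree if and only if every $b \in \N(A)$ has \emph{some} $u \in \N^n$ with $Au = b$ and $\mathrm{supp}(u) \in \Delta_\omega$, i.e.\ if and only if $\N(A) = \bigcup_{\sigma} \N\{a_i : i \in \sigma\}$, the union running over the maximal cells $\sigma$ of $\Delta_\omega$. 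It remains to match this union condition with unimodularity.

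\emph{The two implications.} If $\Delta_\omega$ is unimodular, then for each maximal cell $\sigma$ the columns $\{a_i : i \in \sigma\}$ form a $\Z$--basis of $\Z A$, so $\N(A) \cap \mathrm{cone}(a_i : i \in \sigma) \subseteq \Z A \cap \mathrm{cone}(a_i : i \in \sigma) = \N\{a_i : i \in \sigma\}$; since the maximal cells cover $\mathrm{cone}(A) \supseteq \N(A)$, the union condition holds. For the converse I would argue contrapositively. If a maximal cell $\sigma$ is not unimodular, then $\{a_i : i \in \sigma\}$ is not a $\Z$--basis of $\Z A$, so some generator $a_j$ is nonzero in $\Z A / \Z\{a_i : i \in \sigma\}$. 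For $N$ large, $b := a_j + N \sum_{i \in \sigma} a_i$ lies in $\N(A)$, lies in the (full-dimensional) interior of $\mathrm{cone}(a_i : i \in \sigma)$, and is nonzero modulo $\Z\{a_i : i \in \sigma\}$. Such a $b$ belongs to no $\N\{a_i : i \in \sigma'\}$: when $\sigma' = \sigma$ because $b \notin \Z\{a_i : i \in \sigma\} \supseteq \N\{a_i : i \in \sigma\}$, and when $\sigma' \ne \sigma$ because distinct maximal cells of a triangulation have disjoint interiors. So the union condition fails, and $J$ is not squarefree.

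\emph{Where the difficulty lies.} The reductions above are routine polyhedral bookkeeping; the one step requiring care is the converse in the previous paragraph, that is, manufacturing an explicit element of $\N(A)$ that certifies non-squarefreeness from a single non-unimodular cell. A natural temptation is to bypass this via a degree count: $\deg(\C[x]/I_A)$ equals the normalized volume of $\mathrm{conv}(A)$, which is the sum of the normalized volumes of the cells of $\Delta_\omega$ and so is at least the number of cells (with equality exactly when $\Delta_\omega$ is unimodular), and $\deg$ is preserved under the Gr\"obner degeneration of $I_A$ to $J$. But this governs only the minimal primes and their multiplicities in $J$, whereas initial ideals of toric ideals may acquire embedded primes, so ``multiplicity one along every minimal prime'' does not by itself force $J$ to be radical. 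Tracking the entire $A$--graded Hilbert function, as above, is what makes the argument airtight.
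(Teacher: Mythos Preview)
The survey paper does not contain a proof of this theorem; it merely states the result and cites \cite[Corollary~8.9]{S96}. So there is nothing in the paper itself to compare your argument against.

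Your proof is correct, and it is essentially the argument one finds in Sturmfels's book. The key idea---comparing the $A$-graded Hilbert function of $\C[x]/\mathrm{in}_\succ(I_A)$ (which equals $\mathbbm{1}_{\N(A)}$ via Macaulay's theorem and the isomorphism $\C[x]/I_A\cong\C[\N(A)]$) with that of $\C[x]/I_{\Delta_\omega}$ (which counts lattice points supported on faces)---is exactly the mechanism behind Corollary~8.9 in \cite{S96}. Your reduction to the covering condition $\N(A)=\bigcup_{\sigma}\N\{a_i:i\in\sigma\}$ and the verification that this holds precisely when every maximal cell is unimodular are carried out cleanly; the construction of the witness $b=a_j+N\sum_{i\in\sigma}a_i$ in the non-unimodular case is the standard one.

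Your closing remark is also on point: the shortcut via the ordinary ($\Z$-graded) degree, equating the normalized volume of $\mathrm{conv}(A)$ with the number of maximal simplices, only controls multiplicities along minimal primes and does not rule out embedded components. The multigraded Hilbert function comparison is what closes that gap, and it is precisely what Sturmfels uses.
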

Was there anything interesting if $\mathrm{in}_\succ(I_A)$ was not radical? We were able to characterize the associated primes of $\mathrm{in}_\succ(I_A)$ in terms of polytopes we could construct from the matrix $A$, the vector $\omega \in \R^n$ that gave rise to the term order $\succ$, and certain right-hand-side vectors $b$ of the family
of integer programs $IP_{A,\omega}$. This characterization led to the following result that imposes strong structural restrictions on $\mathrm{in}_\succ(I_A)$. 
\begin{theorem} \cite[Theorem 3.1]{HT99a} If $P$ is an embedded prime of $\mathrm{in}_\succ(I_A)$, then $P$ contains an associated prime $Q$
of $\mathrm{in}_\succ(I_A)$ such that $\dim(Q) = \dim(P)+1$.    
\end{theorem}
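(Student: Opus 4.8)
The plan is to recast the statement in terms of the standard pair decomposition of the monomial ideal $M := \mathrm{in}_\succ(I_A)$, and then to prove the resulting purely combinatorial claim by deforming the right-hand side of the integer programs $IP_{A,\omega}(b)$.

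First I would set up the dictionary. By the Sturmfels--Trung--Vogel standard pair decomposition, the associated primes of $M$ are precisely the primes $P_\sigma := \langle x_i : i \notin \sigma \rangle$ for which $\sigma$ occurs in some standard pair $(x^a,\sigma)$ of $M$, and $\dim(k[x]/P_\sigma) = |\sigma|$. Since $\sqrt{M} = I_{\Delta_\omega}$ by the theorem of \cite{S91} quoted above, the minimal primes of $M$ are the $P_\sigma$ with $\sigma$ a maximal face of the regular triangulation $\Delta_\omega$, hence with $|\sigma| = d = \rank(A)$. Consequently $P$ is embedded precisely when $P = P_\sigma$ with $|\sigma| < d$, and the theorem becomes the assertion that every index set $\sigma$ appearing in a standard pair of $M$ with $|\sigma| < d$ is contained in an index set $\tau$ with $|\tau| = |\sigma| + 1$ that also appears in a standard pair. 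I would record two facts to be used repeatedly: (i) $M$ has only finitely many standard pairs; and (ii) if $(x^a,\sigma)$ is a standard pair of $M$ then $\{a_i : i \in \sigma\}$ is linearly independent, since every lattice point of $a + \mathrm{cone}(e_i : i \in \sigma)$ is the $\succ$-minimal representative of its fiber under $A$, so that cone cannot contain two points with the same $A$-image.

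Next I would carry out the growth step. Let $(x^a,\sigma)$ be a standard pair with $|\sigma| < d$. Because $\rank(A) = d$, there is $k \notin \sigma$ with $a_k \notin \lin\{a_i : i \in \sigma\}$. For $m \in \N$ set $b_m := Aa + m a_k \in \N(A)$ and let $v_m$ be the $\succ$-optimal solution of $IP_{A,\omega}(b_m)$, so that $x^{v_m}$ is the normal form of $x^{a+m e_k}$ modulo the Gröbner basis and $v_0 = a$. Each $x^{v_m}$ is a standard monomial, so by fact (i) and the pigeonhole principle --- and, for a cleaner version, by the eventual linear-plus-periodic behaviour of $v_m$ guaranteed by parametric integer programming together with the connectivity and acyclicity of the fiber graphs $G_{A,\omega,b_m}$ --- some single standard pair $(x^{a'},\tau)$ contains $v_m$ and $v_{m'}$ for two values $m < m'$. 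Then $v_{m'} - v_m$ is supported on $\tau$ and $A(v_{m'} - v_m) = (m'-m)a_k$, so $a_k \in \lin\{a_i : i \in \tau\}$. If one also knows $\sigma \subseteq \tau$, then $\lin\{a_i : i \in \tau\} \supsetneq \lin\{a_i : i \in \sigma\}$, and fact (ii) upgrades this to $|\tau| \geq |\sigma| + 1$; thus $P_\tau$ is an associated prime with $P_\tau \subseteq P_\sigma$ and $\dim P_\tau \geq \dim P_\sigma + 1$.

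The two points that keep this a plan rather than a proof --- the second of which I expect to be the real obstacle --- are the inclusion $\sigma \subseteq \tau$ and the passage from $\dim P_\tau \geq \dim P_\sigma + 1$ to equality. For the first I would prove a compatibility lemma: moving $b$ along a direction of the cone $a + \mathrm{cone}(e_i : i \in \sigma)$ preserves the optimality of the moves $e_i$, $i \in \sigma$, at the running optimum, forcing the governing index set to keep containing $\sigma$. For the second, the natural route is to examine the first value of $m$ at which $v_m$ leaves $a + \mathrm{cone}(e_i : i \in \sigma)$ and to show that at that instant the $\succ$-optimal solution migrates into a standard pair whose index set is exactly $\sigma \cup \{j\}$ for a single $j$; alternatively, among all standard pairs with index set strictly containing $\sigma$ one picks one of minimal size and argues, using how optima cross the walls of the relevant fan, that this size is $|\sigma|+1$. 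Controlling this migration --- showing the governing index set can be made to grow one element at a time --- is where the work lies. Finally I would stress that the toric hypotheses are essential: a general monomial ideal can have an embedded prime whose dimension exceeds by more than one the dimension of every associated prime contained in it, so the argument must genuinely use finiteness of the standard pair decomposition, boundedness of the fibers $P_b$ (which holds because the row span of $A$ contains $[1\,\cdots\,1]$), the connected acyclic fiber graphs, and the linear independence of standard-pair supports.
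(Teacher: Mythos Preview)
The paper does not actually prove this theorem; it only states it with a citation to \cite{HT99a} and a one-sentence indication of the method, namely that the result follows from ``characteriz[ing] the associated primes of $\mathrm{in}_\succ(I_A)$ in terms of polytopes we could construct from the matrix $A$, the vector $\omega$, and certain right-hand-side vectors $b$.'' So there is no proof in this paper to compare yours against line by line, only that hint.

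That said, your proposal is, by your own description, a plan rather than a proof, and the two gaps you flag are the heart of the matter, not technicalities. For the inclusion $\sigma \subseteq \tau$: you are deforming $b$ in the direction $a_k$ with $k \notin \sigma$, so there is no a priori reason the standard pair that eventually captures $v_m$ should remember $\sigma$; your proposed ``compatibility lemma'' is about deformations \emph{within} $\mathrm{cone}(e_i : i\in\sigma)$, which is precisely what you are \emph{not} doing here. For the exact dimension jump: even granting $\sigma \subsetneq \tau$, nothing in the pigeonhole or eventual-periodicity argument forces $|\tau| = |\sigma|+1$, and ``look at the first $m$ where $v_m$ leaves the cone'' does not by itself single out one new index rather than several at once. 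The route the paper points to is different in emphasis: one first establishes a polytopal criterion for $P_\sigma$ to be an associated prime of $\mathrm{in}_\succ(I_A)$, and the chain condition then comes from a facet-type argument on that polytope, where passing to a facet enlarges $\sigma$ by exactly one index while preserving the criterion. Your fiber-deformation idea lives in the same integer-programming world, but without that polytopal characterization it does not supply the control needed to pin down $\tau$ one coordinate at a time.
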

In other words, the associated primes of the initial ideals of toric ideals come in chains: one can start with any embedded prime and reach a minimal prime by following a chain of associated primes and hitting every dimension along the way. Oddly, there has been no work that I am aware of studying this or related phenomena since last century. 

\section*{Acknowledgements}
In 2016, during a happy occasion, I told that I have three families: the nuclear family of my loving wife Ingrid and my son Orhan, the family of my colleagues at the SFSU Mathematics Department who embraced me for 25 years, and finally my first family from the Cornell days when I did not yet have the above two. Bernd, Rekha, and Jes\'us, I am tremendously thankful to you for accepting me into your warm and supportive circle. 

\bibliographystyle{amsplain}
\bibliography{sample}

\end{document}